\tikzstyle{mybox} = [draw=red, fill=blue!20, very thick,
\newcommand{\N}{\mathbb{N}}
\newcommand{\R}{\mathbb{R}}
\newcommand{\ep}{\varepsilon}
\newcommand{\Eqref}[1]{Eq.\,(\ref{#1})}
\newcommand{\dN}{{\bf N}}
\newcommand{\dR}{{\bf R}}
\newcommand{\dZ}{{\bf Z}}
\renewenvironment{proof}[1][\proofname] {\par\pushQED{\qed}\normalfont\topsep6\p@\@plus6\p@\relax\trivlist\item[\hskip\labelsep\bfseries#1\@addpunct{.}]\ignorespaces}{\popQED\endtrivlist\@endpefalse}
\newtheorem{theorem}{Theorem}
\newtheorem{corollary}[theorem]{Corollary}
\newtheorem{remark}[theorem]{Remark}
\newtheorem*{claim*}{Claim}
\newtheorem{lemma}[theorem]{Lemma}
\newtheorem{definition}[theorem]{Definition}
\newtheorem{example}[theorem]{Example}
\begin{document}

\title{Games with infinite past\footnote{We are grateful for useful discussions and suggestions to Dietmar Berwanger, Roberto Cominetti, Michael Greinecker, Christopher Kops, Abraham Neyman, Andr\'{e}s Perea, Arkadi Predtetchinski, J\'{e}r\^ome Renault, Marco Scarsini, Xavier Venel, Anna Zseleva, and the audiences of the MLSE seminar series and the conference 30 Years of Game Theory at Institut Henri Poincar\'{e}.
Solan acknowledges the support of the Israel Science Foundation, Grant \#211/22.}}
\author{Galit Ashkenazi-Golan\footnote{London School of Economics and Political Science, Houghton Street, London WC2A 2AE, UK, E-mail: galit.ashkenazi@gmail.com.}\and 
J\'{a}nos Flesch\footnote{Department of Quantitative Economics, 
Maastricht University, P.O.Box 616, 6200 MD, The Netherlands. E-mail: j.flesch@maastrichtuniversity.nl.}\and 
Eilon Solan\footnote{School of Mathematical Sciences, Tel-Aviv University, Tel-Aviv, Israel, 6997800, E-mail: eilons@tauex.tau.ac.il.}}

\maketitle

\begin{abstract}
We study multi-player games with perfect information and general payoff function, where the set of stages is the set of non-positive integers $\{\ldots,-2,-1,0\}$. We define two related equilibrium concepts: one  considering only deviations at finitely many stages and another considering all deviations. 
We show that (i) The sets of equilibrium plays coincide for the two equilibrium concepts, provided that at least two players are active along each infinite play. (ii) In win-lose games, the game has an equilibrium if the winning sets have Borel-rank at most 2, and we provide a counter-example showing that this is no longer true for Borel-rank 3. 
(iii) In general non-zero-sum games, the game has an equilibrium if 
%either (a) the payoff functions have finite range and Borel-rank at most 2, or (b) 
the payoff functions are continuous, for example, with reversed-time discounted payoffs. The challenge for all these results is that not all strategy profiles admit a consistent infinite play, hampering the use of backward induction arguments.\medskip\\
\textbf{Keywords:} Dynamic game, Equilibrium, Measurable payoff function, Determinacy.
\end{abstract}

\section{Introduction}

Dynamic games form a standard model to describe and analyze repeated interaction between multiple decision makers, and they are used regularly in various disciplines, among others in economics, computer science, biology, psychology, and descriptive set theory. Oftentimes, the interaction takes place on a very large, possibly unknown or clouded, number of occasions, making it virtually impossible or even undesirable to set a specific future time horizon in the model description. To capture such situations, it is common to employ mathematical abstraction and assume that the number of stages of the dynamic game is actually infinite, for instance, the set of natural numbers.  
In the past 80 years,
infinitely repeated games have been extensively studied;
they provide a tractable framework to analyzing various aspects of dynamic interactions and offer elegant solutions to a diverse set of applications.

In the very same vein, interactions regularly have a long past, or a clouded starting point. When did the human race begin its interaction with its environment? Was it perhaps $300,000$ years ago, when the first fossils of homo sapiens were found,
or was it when homo sapiens sufficiently differed anatomically from its predecessor species? 
And what does ``sufficiently differed anatomically'' mean anyway? And was there an exact time when the competition between Coca-Cola and Pepsi started? Was it in $1893$, when Pepsi was invented, or in $1898$, when this beverage was branded Pepsi-Cola, or in the early $1900$s, when both firms started national distribution? 

To describe and analyze such interactions, in this paper we propose a model in which the set of stages has no initial element, or putting it differently, the game has an infinite past. We view this paper as a first step in this direction and hope that the many interesting questions it raises will inspire follow-up work.\medskip

\noindent\textbf{Our model.} 
We study games with perfect information that have no starting point, yet they do have a definite end point. 
Formally, these games are played on the set of stages $\dZ_0 := \{\dots, -2,-1,0\}$.
Thus, similarly to standard repeated games,
these games last infinitely many stages,
but unlike repeated games, these infinitely many stages lie in the past and not in the future. That is, any stage of the game is preceded by an infinite number of stages, implying that the action choice of a player in any stage may depend on the infinitely many actions selected by the players in the earlier stages.

In our games, strategies can be defined in the usual way. Yet, as the game has no initial stage and no initial position, a strategy profile does not always determine a unique consistent run, i.e., an infinite sequence of actions that follows the actions recommended by the strategies. 
In fact, an interesting feature of games with infinite past
is that
there are strategy profiles that do not admit a consistent run at all. Indeed, suppose that there is only one player, whose strategy recommends action 0 if action 1 was selected in all previous stages, and recommends action 1 otherwise (cf.~Example \ref{ex-norun}). 
It can be seen 
that no run is consistent with this strategy. An important implication of this phenomenon is that the use of backward induction is largely hampered in our games: a strategy profile determined by backward induction may not be a solution of the game because it 
might admit no consistent run (cf.~Section \ref{sec-backind} for examples, and for a more detailed discussion). 

Therefore, in most of our existence proofs, we replace backward induction by Cantor's intersection theorem (cf.~Section \ref{sec-backind} for more details), which states that in a Hausdorff topological space, every nested sequence of nonempty and compact sets has a nonempty intersection. More concretely, in the set of all runs, we construct a nested sequence of nonempty and compact sets, such that each set in this sequence only contains runs that behave nicely from a given stage. 
Cantor's intersection theorem will
imply that the intersection of these sets is nonempty, and 
then 
every run in the intersection can be supplemented with a strategy profile so that we obtain an equilibrium. 

It is important to note that there are also strategy profiles with multiple consistent runs (cf.~Example \ref{ex-tworuns} and Remark \ref{rem-contcons}), so any solution concept must also be careful in how it  compares strategy profiles from a player's point of view.\medskip

\noindent\textbf{Main results.}\\
\textbf{[1] Equilibrium concepts:} Motivated by the considerations above, we define an equilibrium concept based not only on a strategy profile, but also on a consistent run. We call a pair $(s,r)$, consisting of a strategy profile $s$ and a run $r$ that is consistent with $s$, an \textit{equilibrium} (cf.~Definition \ref{def-weakeq}) if in each stage, in the corresponding position along $r$, the strategy profile $s$ induces a Nash equilibrium in the continuation subgame. Thus, the concept of an equilibrium only takes into account deviations from the run $r$ in a finite number of stages. 

We also define a closely related but stronger concept, which is solely based on a strategy profile. We call a strategy profile $s$ a \textit{strong equilibrium} (cf.~Definition \ref{def-strongeq}) if $s$ admits a unique consistent run $r$, and for any player, her payoff in $r$ is at least as much as her payoff in any run
that is consistent if she deviates from her strategy. Thus, the concept of strong equilibrium also considers deviations in infinitely many stages.

We explore the properties of these two equilibrium concepts, and their  relation. The definitions directly imply that if $s$ is a strong equilibrium and $r$ is the (unique) run consistent with $s$, then $(s,r)$ is an equilibrium. We prove a converse under a mild condition: if along each run at least two players are active infinitely often, then for every equilibrium $(s,r)$, there is a strong equilibrium $s'$ such that $r$ is the unique run that is consistent with $s'$ (cf.~Theorem \ref{theorem-weak=strong}).
This shows that the two notions of equilibrium are highly interconnected.

The set of runs in the game can be partitioned in a natural way: two runs belong to the same element of the partition if they coincide up to some stage, i.e., 
they have a common 
(infinite) prefix.
We call each element of this partition a \textit{segment} of the set of runs (cf.~Definition \ref{def-seg}). 
For example, if $a$ and $b$ are two actions, then the run consisting only of action $a$ and the run consisting only of action $b$ have no common 
prefix,
and therefore belong to different segments. 
We discuss the equilibrium concepts when the game is restricted to a segment (cf.~Section \ref{sec-seg}). As it turns out, within a segment, the game behaves much closer to ``standard'' games.\smallskip

\noindent\textbf{[2] Two-player win-lose games.} Zero-sum games is one of the most important classes of games, and win-lose games are arguably the most basic versions of them. 
A two-player game is called a win-lose game if there are only two outcomes: 
a win for player~1 (which is a loss for player~2)
and a win for player~2 (which is a loss for player~1).
Two-player win-lose games are frequently studied in the game-theoretic literature, and they play a major role in the field of descriptive set theory (cf.~Kechris [2012]). 

In ``standard'' two-player win-lose games where the set of stages is the set of natural numbers, a landmark result of Martin [1975] shows that an equilibrium exists as soon as 
player~1's winning set
is Borel-measurable (with respect to the product topology). This result is a building stone of many existence results, cf.~for instance, the proof of Mertens and Neyman in Mertens [1987], 
Chatterjee and Henzinger [2012], 
Martin [1998], Maitra and Sudderth [1998], Ashkenazi-Golan et al.~[2022], Flesch and Solan [2023], and the references therein. 

In this paper we explore the existence of equilibrium in two-player win-lose games in the presence of infinite past. 
We show 
that in such games,
a (strong) equilibrium exists as long as player 1's winning set has Borel-rank at most 2 (cf.~Theorem \ref{theorem-rank2}), but it 
need not exist
for winning sets of Borel-rank 3 or higher (cf.~Example \ref{ex-counter}). We also examine the consequences of these results when the game is restricted to a segment (cf.~Section \ref{sec-segwinlose}), and we find a much closer parallel to ``standard'' games, e.g., in a segment one can define the value in the usual way.\smallskip

\noindent\textbf{[3] Multi-player (non-zero-sum) games.} We examine multi-player games with infinite past. In ``standard'' games where the set of stages is the set of natural numbers, some of the most common existence results are based on continuity of the payoff functions (e.g., Fudenberg and Levine [1983]). We examine games with continuous payoffs in the presence of infinite past, and establish the existence of (strong) equilibrium, provided that the identity of the active player is also assigned in a continuous way to the set of positions (cf.~Theorem \ref{theorem-cont}).\medskip

\noindent\textbf{Related literature.} 
As mentioned before, our paper is related to the literature on infinite horizon games, where the set of stages is the set of natural numbers, see for instance Fudenberg and Tirole [1991], Mertens et al.~[2015], Maschler et al. [2020], and to the literature on the determinacy of two-player win-lose games, which started with the seminal paper of Gale and Stewart [1953], see for instance Martin [1975] and Kechris [2012]. 

To the best of our knowledge, there is only a limited amount of literature on games where the past is infinite. 
There is a closely related discussion on the forum MathOverflow,\footnote{https://mathoverflow.net/questions/112150/games-that-never-begin.} which focuses on two-player win-lose games with infinite past, and includes various ideas and questions: (1) The question is raised on how to define optimal strategies in such a game, and some ideas are proposed. (2) Consistency of runs with strategies is discussed, including the strategy in Example \ref{ex-norun}, which admits no consistent run. (3) It is mentioned that it would be natural to define winning strategies in such a way that they are winning in each subgame where this player has a winning strategy. This idea is also reflected in our equilibrium notion. (4) Relations to infinite time computation and infinitary logic is discussed. (5) Using the axiom of choice, a game is described that admits no value. As far as we can see, this idea does not guarantee that the winning set is Borel. 

We now mention a few other papers where some kind of infinite past appears. From a philosophical perspective, Morriston [1999] and Sorensen [1999]  raised the question whether the past must have a beginning, and Sorensen [1999] also discussed to which extent infinite backward induction arguments make sense. Algoet and Cover [1992] described an investment model, where infinite past can be interpreted as an idealized theoretical situation. Gorokhovsky and Rubinchik [2018] considered general equilibrium in a model with infinite past and future, in which the inclusion of translation-invariant equilibrium (fixed-point) mappings imply that the model has to include the infinite past. Puente [2006] considered discounted repeated games where the set of stages is the set of integers. His equilibrium notion requires that the strategy profile be an equilibrium from any stage on, in the remaining infinitely many stages, and among others, he presented a folk theorem for these games. 

We would also like to mention that various papers consider dynamic models where, in particular cases, an underlying Markov process starts in its invariant distribution, see, e.g., 
Kandori et al. [1993],
Iyer et al. [2012],
Adlakha et al. [2015],
or
Balseiro et al. [2015]. 
This may have the interpretation that the game starts after an infinite past, in which the Markov process reached its invariant distribution. 

Finally, for a general treatment of extensive form games, we refer to the book by Al\'{o}s-Ferrer and Ritzberger [2016].\medskip

\noindent\textbf{Structure of the paper.} In Section \ref{sec-prel}, we discuss some mathematical preliminaries. In Section \ref{sec-model}, we define our model, and discuss strategies and consistent runs, as well as the partitioning of the set of runs into segments. In Section \ref{sec-eq}, we define and compare our equilibrium concepts. Sections \ref{sec-winlose} and \ref{sec-multiplayer} are devoted to two-player win-lose games and to games with continuous payoffs, respectively. We close the paper with some concluding remarks in Section \ref{sec-discussion}.  

\section{Preliminaries - the Borel Hierarchy}\label{sec-prel}

Let $X$ be a metrizable topological space. The Borel hierarchy on $X$ consists of classes of Borel sets, describing how Borel sets can be constructed from open sets using complementation and countable unions. We will have specific results for the first three ranks of the Borel hierarchy. For a more general and detailed discussion we refer to Kechris [2012, p68].

The class $\Sigma_1$ consists of all open sets, and the class $\Pi_1$ consists of all closed sets. The sets belonging to $\Sigma_1$ or to $\Pi_1$ are said to have \emph{Borel-rank 1}. 

The class $\Sigma_2$ consists of all countable unions of sets in $\Pi_1$ (these are 
called the $F_\sigma$-sets), and the class $\Pi_2$ consists of the complements of the sets in $\Sigma_2$ (these are 
called the $G_\delta$-sets, and they can be written as countable intersections of open sets). The sets belonging to $\Sigma_2$ or to $\Pi_2$ are said to have \emph{Borel-rank 2}, and because $X$ is metrizable, $\Delta_2=\Sigma_2\cap \Pi_2$ contains all sets of rank 1.

The class $\Sigma_3$ consists of all countable unions of sets in $\Pi_2$, and the class $\Pi_3$ consists of the complements of the sets in $\Sigma_3$. The sets belonging to $\Sigma_3$ or to $\Pi_3$ are said to have 
\emph{Borel-rank 3}, and $\Delta_3=\Sigma_3\cap \Pi_3$ contains all sets of rank 2 (and rank 1).

\section{The model}\label{sec-model}

In this section we provide the formal definition of our model. 
In Section \ref{sec-game} we define games with infinite past. In Section \ref{sec-str} we define strategies and runs that are consistent with strategies. Finally, in Section \ref{sec-seg} we discuss a natural partition of the set of positions of the game into so-called segments, which gives a deeper insight into the strategic possibilities of the players and consistency of runs.

\subsection{The game}\label{sec-game}

Let $\dZ_0=\{\ldots,-2,-1,0\}$ be the set of non-positive integers. For each $n\in\dZ_0$, let $\dZ_n=\{\ldots,n-2,n-1,n\}$. We emphasize that the symbol $n$ will always represent an element of $\dZ_0$, and hence a non-positive integer.\medskip

We study dynamic games where before each stage of the game, already infinitely many earlier stages took place. In particular, in such a game there is no initial position. 
We formalize this property by setting the set of stages to be the set of non-positive integers $\dZ_0$. Thus, stage 0 is the terminal stage.%
\footnote{In Section~\ref{sec-discussion} we discuss games that are played over the set of stages $\dZ$ -- the set of integers.}

\begin{definition}[\textbf{Game with infinite past}]\label{def-game}\rm 
A \emph{game with infinite past}\ is a tuple $G=(I,A,\iota,(u_i)_{i\in I})$ where 
\begin{itemize}
\setlength\itemsep{0cm}
	\item $I$ is a nonempty finite set of \emph{players}.
	 \item $A$ is a nonempty finite set of \emph{actions} 
    that contains at least two elements.
\\[0.05cm]
	A \emph{position} in stage $n\in\dZ_0$ is a sequence
    $p_n \in P_n:=A^{\dZ_{n-1}}$,
    which is denoted\footnote{In our notation we will always include specifically the stage $n$, either as a subscript in $p_n$, or through the subscript in the final action $a_{n-1}$. Note that two positions in two different stages, while both being an infinite sequence of actions, are never equal, because the domains (the index sets) of the relevant stages are different.} 
    by $p_n=(\ldots,a_{n-2},a_{n-1})$. The set of positions is $P=\bigcup_{n\in\dZ_0}P_n$. 
    A \emph{run} is a sequence $r \in R :=A^{\dZ_{0}}$, which is denoted by $r=(\ldots,a_{-2},a_{-1},a_{0})$. 
    \color{black}
	\item $\iota:P\to I$ is a function that, to each position $p\in P$, assigns a player $\iota(p)$ who controls this position $p$.
	\item $u_i:R\to\dR$ is a bounded payoff function, for each player $i\in I$.
\end{itemize}
\end{definition}

\begin{remark}[\textbf{On the set of actions}]\rm 
The requirement that the same set of actions is available for all players at all positions is made for convenience only. $\blacklozenge$
\end{remark}

The interpretation of the game is as follows. If the current stage is $n\in\dZ_0$, then all stages $k<n$ are past stages and all stages $k>n$ are future stages.
If the current position is $p_n=(\ldots,a_{n-2},a_{n-1})$, then player $\iota(p_n)$ chooses an action $a_n\in A$, which is observed by all players. If $n<0$, then the game continues in stage $n+1$ in position $p_{n+1}=(\ldots,a_{n-2},a_{n-1},a_n)$. 
If $n=0$, then the game ends and the outcome of the game is the run $r=(\ldots,a_{-2},a_{-1},a_0)$ with payoff $u_i(r)$ to each player $i\in I$.
Thus, the position in a stage is the infinite sequence of actions selected by the players in all earlier stages. 
 
\begin{remark}[\textbf{Players' information in games with infinite past}]\rm 
Games with infinite past are perfect information games: in every stage, only one player chooses an action, and each player knows the actions that were selected in all previous stages of the game. $\blacklozenge$
\end{remark}

For every position $p_n=(\ldots,a_{n-2},a_{n-1})$ and
every
action $a_n$,
the position obtained by selecting $a_n$ at $p_n$ is denoted by $(p_n,a_n)= (\ldots,a_{n-2},a_{n-1},a_n)$.
For every run $r=(\ldots,a_{-2},a_{-1},a_0)$
and every $n \in \dZ_0$,  
the position in stage $n$ along $r$ is
denoted by $r_{<n} = (\ldots,a_{n-2},a_{n-1})$.\medskip

\noindent\textbf{Subgames.} Each position $p_n=(\ldots,a_{n-2},a_{n-1})\in P_n$ in stage $n\in\dZ_0$ naturally induces a \emph{subgame} $G[p_n]$: this is the continuation game played in the stages $n,n+1,\ldots,0$ that arises when up to stage $n-1$ the actions are given by $p_n$. We emphasize that a subgame is a finite game: it lasts only finitely many stages and the players have only finitely many actions.\medskip

\noindent\textbf{The product topology on the set $R$ of runs.} Following the literature on dynamic games and descriptive set theory, 
we endow the set of runs $R=A^{\dZ_0}$ with the product topology on $R$ when $A$ is endowed with its discrete topology. Note that $R$ with this topology is metrizable, so the Borel hierarchy of $R$ has the properties as discussed in Section~\ref{sec-prel}. Moreover, $R$ is compact because $A$ is finite.

\subsection{Strategies and consistent runs}\label{sec-str}

In games with infinite past, strategies are defined in the usual way, 
as a map from the set of positions where a player is active to the set of actions.
As is customary in perfect information games, we restrict attention to pure strategies.

\begin{definition}[\textbf{Strategy}]\label{def-str}\rm A \emph{strategy} for player $i\in I$ is a map $s_i\colon \{p\in P\colon \iota(p)=i\}\to A$ that assigns an action $s_i(p)$ to each position $p$ that player $i$ controls. A \emph{strategy profile} is a vector $s=(s_i)_{i\in I}$ consisting of one strategy for each player.
\end{definition}

In the standard setting where the set of stages is the set $\N$ of natural numbers, the game starts in a given initial position in stage 1, and therefore every strategy profile induces a unique run. In contrast, in games with infinite past there is no initial stage and no initial position, and therefore, under a strategy profile, it is a priori not always clear which run will be realized. 

In the definition below, we define consistency of a run with a strategy of a given player, which intuitively requires that this run not be excluded under this strategy. More precisely, a run is consistent with a strategy of a player if, along the run,
in each position that is controlled by this player, the strategy chooses the action corresponding to the run. We define similarly consistency of a run with a strategy profile.  

\begin{definition}[\textbf{Consistent run}]\rm A run $r=(\ldots,a_{-1},a_{0})\in R$ is \emph{consistent} with a strategy~$s_i$ of player $i\in I$ if for each stage $n\in\dZ_0$ for which $\iota(r_{<n})=i$ we have $a_{n} = s_{i}(r_{<n})$.
A run $r\in R$ is consistent with a strategy profile $s=(s_i)_{i\in I}$ if $r$ is consistent with the strategy $s_i$ for each player $i\in I$. We denote by $R(s)$ the set of runs consistent with a strategy profile~$s$.
\end{definition}

In games with infinite past,
sometimes a consistent run does not exist,
and sometimes 
there are several runs that are consistent with the same strategy.

\begin{example}[\textbf{A strategy with no consistent run}]\label{ex-norun}\rm 
Suppose that there is only one player and there are two actions: $I = \{1\}$ and $A = \{0,1\}$.
Consider the following strategy $s_1$: play action 0 if action 1 was selected in \emph{all} previous stages, and play action 1 otherwise.
That is,
for every $n\in\dZ_0$ and every position $p_n\in P_n$,
\[ s_1(p_n) = \left\{
\begin{array}{ll}
0, & \hbox{if } p_n = (\dots,1,1,1),\\
1, & \hbox{otherwise}.
\end{array}
\right.
\]
We briefly argue that $s_1$ admits no consistent run: $R(s_1)=\emptyset$. Indeed, suppose the opposite: the run $r\in R$ is consistent with $s_1$. If $r$ only contains action 1, then $s_1$ recommends action 0 in any stage along $r$, which is a contradiction. And if $r$ contains action 0 at least once, say in stage $n\in\dZ_0$, then due to consistency with $s_1$, the run $r$ only contains action~1 until stage $n-1$, which is again a contradiction, just as in the previous case. $\blacklozenge$
\end{example}

In the following example, the strategy admits exactly two consistent runs. As we discuss later in Remark \ref{rem-contcons}, a strategy can also admit a continuum of consistent runs. 

\begin{example}[\textbf{A strategy with two consistent runs}]\label{ex-tworuns}\rm 
Suppose that there is only one player and there are two actions: $I = \{1\}$ and $A = \{0,1\}$. Consider the strategy $s_1$ that always repeats the previous action: in any stage $n$, in any position $p_n=(\dots,a_{n-2},a_{n-1}) \in P_n$, we have $s_1(p_n) = a_{n-1}$. 

Then, exactly two runs are consistent with $s_1$: the run in which the player always chooses action $0$, and the run in which the player always chooses action $1$. That is, $R(s)=\{(\ldots,0,0),(\ldots,1,1)\}$. $\blacklozenge$
\end{example}

\begin{remark}[\textbf{On the set $R(s)$ of all consistent runs}]\rm
Given a strategy profile $s$, the set $R(s)$ of consistent runs can be determined through an infinite elimination procedure: Let $R^n(s)$ denote the set of runs $r$ such that $r$ is consistent with $s$ in stages $n,\ldots,-1,0$. Then, $R^n(s)$ is a non-increasing sequence, and $R(s)=\bigcap_{n=0}^{-\infty} R^n(s)$.\\
Note that every finite-memory strategy profile $s$, where the action choice only depends on the last $k$ stages, for some $k\in\N$, always admits a consistent run: $R(s)\neq\emptyset$. $\blacklozenge$
\end{remark}

\subsection{Segments}\label{sec-seg}

In this section we discuss a natural partition of the set of positions of the game into so-called segments. The main merit of 
this partition 
is that it provides a deeper insight into the structure of the game, the strategic possibilities of the players, and the consistency properties of runs with strategies and strategy profiles. Moreover, we will be able to define a game restricted to a segment, which has properties much closer to ``standard'' games. And, we will use segments later to explain our solution concepts and main results in more detail. 

We define a relation, denoted by $\sim$, on the set $R$ of runs: for two runs $r,r'\in R$, we write $p\sim p'$ if and only if they coincide until a certain stage: there is a stage $n\in\dZ_0$ such that $r_{<n}=r'_{<n}$. Equivalently, $r\sim r'$ if and only if $r$ and $r'$ differ in at most finitely many actions. The relation $\sim$ on $R$ is an equivalence relation (reflexive, symmetric, and transitive), and therefore it induces equivalence classes. 

\begin{definition}[\textbf{Segment and corresponding positions}]\label{def-seg}\rm
A set $\Omega\subseteq R$ of runs is called a \emph{segment} if $\Omega$ is an equivalence class with respect to the relation $\sim$ on $R$. The set of positions corresponding to a segment $\Omega$, denoted by $P_\Omega$, consists of those positions that are prefixes of a run in $\Omega$:
\[P_\Omega\,=\,\big\{p\in P\colon p=r_{<n}\text{ for some run }r\in\Omega\text{ and for some stage }n\in\dZ_0\big\}.\]
\end{definition}

Segments, and the set of positions corresponding to them, have the following properties:
\begin{itemize}
    \item Let $\Omega$ be a segment. Then, in any position $p\in P_\Omega$ it is already fixed that the run will belong to $\Omega$: it does not matter which actions are chosen in the subgame $G[p]$, the induced run will belong to $\Omega$. So, intuitively, the segment is already determined by the pre-tail of the run, that is, by the actions played in stages $\ldots,n-2,n-1,n$ where $n$ is arbitrarily small.
    \item The collection $\big\{P_\Omega\colon \Omega\text{ is a segment}\big\}$ is a partition of the set $P$ of positions.
    \item The cardinality of each segment $\Omega$ is countably infinite, and so is the cardinality of $P_\Omega$. Moreover, each segment $\Omega$ is a countable union of closed sets, so $\Omega$ has Borel-rank 2. 
    These two observations
    follow from the fact that for every segment $\Omega$ and every run $r\in\Omega$, we have $\Omega\,=\,\bigcup_{n\in\dZ_0} R(n,r)$, where $R(n,r)=\{r'\in R\colon r_{<n}=r'_{<n}\}$ is the finite set of runs that coincide with $r$ until stage $n$. 
    \item The cardinality of the collection of all segments is equal to the continuum, and so is the cardinality of the collection $\big\{P_\Omega\colon \Omega\text{ is a segment}\big\}$. 
    These observations hold because 
    the cardinality of the set $R$ of runs is equal to the continuum, and each segment is countably infinite.
\end{itemize}

Figure \ref{fig-seg} provides a visual illustration of the segments of a game.

\begin{figure}
    \centering
\vspace{0.5cm}
\begin{tikzpicture}[scale=0.45]

\node[right] at (-3.5,0) {};
\node[right] at (-3.5,1) {\tiny stage $0$};
\node[right] at (-3.5,2) {\tiny stage $-1$};

\node[circle, fill=black, inner sep = 2pt] (n11) at (0,0) {};
\node[circle, fill=black, inner sep = 2pt] (n12) at (1,0) {};
\node[circle, fill=black, inner sep = 2pt] (n13) at (2.4,0) {};
\node[circle, fill=black, inner sep = 2pt] (n14) at (3.4,0) {};
\node[circle, fill=black, inner sep = 2pt] (n21) at (0.5,1) {};
\node[circle, fill=black, inner sep = 2pt] (n22) at (2.9,1) {};
\node[circle, fill=black, inner sep = 2pt] (n31) at (1.7,2) {};
\node[circle, fill=black, inner sep = 1pt] at (4.5,0.5) {};
\node[circle, fill=black, inner sep = 1pt] at (4.8,0.5) {};
\node[circle, fill=black, inner sep = 1pt] at (5.1,0.5) {};
\node[circle, fill=black, inner sep = 1pt] at (2.75,2.9) {};
\node[circle, fill=black, inner sep = 1pt] at (3,3.1) {};
\node[circle, fill=black, inner sep = 1pt] at (3.25,3.3) {};
\node[circle, fill=black, inner sep = 1pt] at (4.5,1.5) {};
\node[circle, fill=black, inner sep = 1pt] at (4.8,1.5) {};
\node[circle, fill=black, inner sep = 1pt] at (5.1,1.5) {};
\draw[->] (n21) -- (n11);
\draw[->] (n21) -- (n12);
\draw[->] (n22) -- (n13);
\draw[->] (n22) -- (n14);
\draw[->] (n31) -- (n21);
\draw[->] (n31) -- (n22);

\draw[dashed, dash pattern=on 1.5pt off 1.5pt, line width = 1pt] (6.1,-0.3) -- (6.1,6);

\node[circle, fill=black, inner sep = 1pt] at (7,1.5) {};
\node[circle, fill=black, inner sep = 1pt] at (7.3,1.5) {};
\node[circle, fill=black, inner sep = 1pt] at (7.6,1.5) {};
\node[circle, fill=black, inner sep = 1pt] at (7.9,1.5) {};
\node[circle, fill=black, inner sep = 1pt] at (8.2,1.5) {};
\node[circle, fill=black, inner sep = 1pt] (ndts) at (8.5,1.5) {};
\node[circle, fill=black, inner sep = 1pt] at (8.8,1.5) {};
\node[circle, fill=black, inner sep = 1pt] at (9.1,1.5) {};
\node[circle, fill=black, inner sep = 1pt] at (9.4,1.5) {};
\node[circle, fill=black, inner sep = 1pt] at (9.7,1.5) {};
\node[circle, fill=black, inner sep = 1pt] at (10,1.5) {};
\node[circle, fill=black, inner sep = 1pt] at (10.3,1.5) {};
\node[circle, fill=black, inner sep = 1pt] at (10.6,1.5) {};
\node[circle, fill=black, inner sep = 1pt] at (10.9,1.5) {};
\node[circle, fill=black, inner sep = 1pt] at (11.2,1.5) {};
\node[circle, fill=black, inner sep = 1pt] at (11.5,1.5) {};
\node[right] at (6.5,0.9) {\tiny continuum of segments}; 

\draw[dashed, dash pattern=on 1.5pt off 1.5pt, line width = 1pt] (12.4,-0.3) -- (12.4,6);

\node[circle, fill=black, inner sep = 2pt] (k11) at (15.1,0) {};
\node[circle, fill=black, inner sep = 2pt] (k12) at (16.1,0) {};
\node[circle, fill=black, inner sep = 2pt] (k13) at (17.5,0) {};
\node[circle, fill=black, inner sep = 2pt] (k14) at (18.5,0) {};
\node[circle, fill=black, inner sep = 2pt] (k21) at (15.6,1) {};
\node[circle, fill=black, inner sep = 2pt] (k22) at (18,1) {};
\node[circle, fill=black, inner sep = 2pt] (k31) at (16.8,2) {};
\node[circle, fill=black, inner sep = 1pt] at (13.4,0.5) {};
\node[circle, fill=black, inner sep = 1pt] at (13.7,0.5) {};
\node[circle, fill=black, inner sep = 1pt] at (14,0.5) {};
\node[circle, fill=black, inner sep = 1pt] at (15.5,3.3) {};
\node[circle, fill=black, inner sep = 1pt] at (15.75,3.1) {};
\node[circle, fill=black, inner sep = 1pt] at (16,2.9) {};
\node[circle, fill=black, inner sep = 1pt] at (13.4,1.5) {};
\node[circle, fill=black, inner sep = 1pt] at (13.7,1.5) {};
\node[circle, fill=black, inner sep = 1pt] at (14,1.5) {};
\draw[->] (k21) -- (k11);
\draw[->] (k21) -- (k12);
\draw[->] (k22) -- (k13);
\draw[->] (k22) -- (k14);
\draw[->] (k31) -- (k21);
\draw[->] (k31) -- (k22);
\end{tikzpicture}
\caption{\small Segments of a game with two actions, $\ell$ (left) and $r$ (right). 
The left (resp., right) third of the figure corresponds to the segment that contains the run $(\ldots,\ell,\ell)$
(resp., $(\ldots,r,r)$).
The bottom-left node corresponds to the run $(\ldots,\ell,\ell)$, the node above it to the position $(\ldots,\ell,\ell)$ in stage $0$, the bottom-right node to the run $(\ldots,r,r)$,
etc. 
The dotted vertical lines indicate that segments have no common position, and between the two dotted lines there are a continuum of segments.}
\label{fig-seg}
\end{figure}
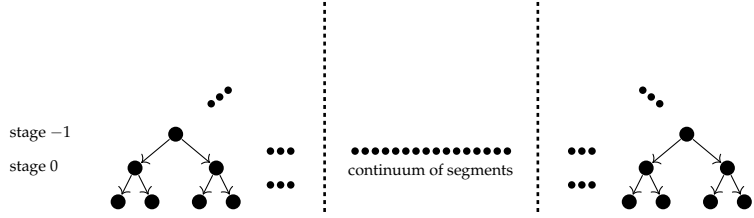

\begin{definition}[\textbf{Strategy permitting a segment}]\rm 
A strategy $s_i$ of player $i\in I$ is said to \emph{permit a segment} $\Omega$ if there is a run $r\in\Omega$ that is consistent with $s_i$.
\end{definition}

The next lemma specifies the number of consistent runs under a strategy profile depending on which segments are permitted by the players' strategies.

\begin{lemma}[\textbf{Number of consistent runs}]\label{lem-strpercons}
Consider a strategy profile $s=(s_i)_{i\in I}$ and a segment $\Omega$. 
\begin{enumerate}
    \item[{[1]}] If $s_i$ does not permit $\Omega$ for some player $i\in I$, then $s$ admits \emph{no} consistent run in $\Omega$.
    \item[{[2]}] If $s_i$ permits $\Omega$ for each player $i\in I$, then $s$ admits a \emph{unique} consistent run in $\Omega$, i.e., $R(s)\cap\Omega$ is a singleton. 
\end{enumerate}
\end{lemma}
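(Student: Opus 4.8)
The plan is to dispatch part [1] immediately from the definitions, to handle uniqueness in part [2] by a forward-determinism argument, and to concentrate the real work on existence in part [2], where the finiteness of the player set $I$ is the crucial ingredient.

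For part [1], suppose $s_i$ does not permit $\Omega$ for some player $i$. By definition this means no run in $\Omega$ is consistent with $s_i$. Since consistency with the profile $s$ entails consistency with each component strategy, in particular with $s_i$, no run in $\Omega$ can be consistent with $s$; that is, $R(s)\cap\Omega=\emptyset$. This requires nothing beyond unwinding the definition of consistency.

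For uniqueness in part [2], I would take $r,r'\in R(s)\cap\Omega$ and show $r=r'$. Since $r\sim r'$, they share a common prefix: there is a stage $n_0\in\dZ_0$ with $r_{<n_0}=r'_{<n_0}$. I then argue by forward induction on the stages $n_0,n_0+1,\ldots,0$ that $r$ and $r'$ coincide. At a stage $m\ge n_0$ at which they agree on the position $p:=r_{<m}=r'_{<m}$, the active player is $i=\iota(p)$, and consistency of both runs with $s_i$ forces the action at stage $m$ to equal $s_i(p)$ in each run; hence the positions agree one stage later as well. Thus $r_{<m}=r'_{<m}$ for every $m$ up to and including $0$, which, combined with the agreement before $n_0$, yields $r=r'$.

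Existence is the main obstacle. The subtlety is that the hypothesis supplies, for each player $i$ separately, \emph{some} run $r^i\in\Omega$ consistent with $s_i$, and these runs may differ across players; the task is to glue them into a single run consistent with every $s_j$ at once. Here finiteness of $I$ is essential: all $r^i$ lie in the one equivalence class $\Omega$, hence are pairwise $\sim$-equivalent, so for each pair I can choose a stage at which they agree, and taking the minimum $n_0$ over the finitely many pairs gives a stage such that all the $r^i$ share the common prefix $q:=r^i_{<n_0}\in P_{n_0}$ (agreement before a stage is preserved when passing to an earlier stage). I would then define $r$ to equal $q$ on the stages before $n_0$ and, from stage $n_0$ onward, to follow $s$ in the finite subgame $G[q]$; this is unambiguous precisely because that subgame lasts only finitely many stages. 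The resulting run lies in $\Omega$, since it agrees with $r^i$ before $n_0$. To verify consistency with an arbitrary $s_j$: at a stage $m\ge n_0$ controlled by $j$ the action is $s_j(r_{<m})$ by construction, while at a stage $m<n_0$ controlled by $j$ the position and action of $r$ coincide with those of $r^j$ (all $r^i$ agree before $n_0$), so consistency is inherited from $r^j$. Hence $r\in R(s)\cap\Omega$, and together with uniqueness this makes $R(s)\cap\Omega$ a singleton. The one point demanding care is exactly the appeal to finiteness of $I$ to extract the single gluing stage $n_0$; without it, no common prefix stage need exist and the construction would break down.
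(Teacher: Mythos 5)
Your proposal is correct and follows essentially the same route as the paper: part [1] from the definitions, uniqueness via the shared prefix plus forward determinism of $s$, and existence by using finiteness of $I$ to find a common prefix stage for the runs $r^i$ and then extending by $s$ through the finite subgame. Your write-up merely spells out the consistency check at stages before $n_0$, which the paper leaves as ``by construction.''
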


\begin{proof} Part 1 follows from the definitions. Thus, it remains to prove Part~2. So, assume that $s_i$ permits $\Omega$ for each player $i\in I$. 

First we show that $R(s)\cap\Omega$ contains at most one run. Indeed, let $r,r'\in R(s)\cap\Omega$. As $r,r'\in \Omega$, there is a stage $n$ such that $r_{<n}=r'_{<n}$. 
Since
$r,r'\in R(s)$, 
it follows 
that $r=r'$. 

Now we show
that
$R(s)\cap\Omega$ contains at least one run. Since $s_i$ permits $\Omega$ for each player $i\in I$, each $s_i$ admits a consistent run $r_i\in\Omega$. For all $i,j\in I$ with $i\neq j$, as $r_i$ and $r_j$ belong to the same segment $\Omega$, there is a stage $n_{i,j}$ such that $r_i$ and $r_j$ coincide up to stage $n_{i,j}$. Let $n$ be the minimum of all such $n_{i,j}$. Then, all runs $r_i$, where $i\in I$, coincide with each other up to stage $n$. 
Let this common position in stage $n$ be denoted by $p_n=(\ldots,a_{n-2},a_{n-1})$. 
Consider the run $r=(p_n,a_n,\ldots,a_{-1},a_0)$ where the actions $a_n,\ldots,a_{-1},a_0$ are chosen according to the strategy profile $s$. By construction, $r$ is consistent with $s$ and $r\in \Omega$.
\end{proof}

In view of Example \ref{ex-norun}, a strategy of a given player may not permit any segment, and in view of Example \ref{ex-tworuns}, a strategy may permit some segments but not others. We will present a generalization of these statements below in Lemma \ref{lem-whichseg}.

We say that along a run $r=(\ldots,a_{-1},a_0)$ player $i\in I$ is \emph{active infinitely often}, if along $r$ the number of positions in which player $i$ is the active player is infinite:
\[\big|\{n\in \dZ_0\colon \iota(r_{<n})=i\}\big|\ =\ \infty.\]
We will be mostly interested in games in which along each run there are at least two players who are active infinitely often.

Part 1 of the following lemma shows that if a given player is active infinitely often along each run, then for any (possibly empty) collection of segments, she has a strategy that permits exactly these segments. Part 2 shows that strategy profiles can single out each run: this run is the unique run consistent with the strategy profile. This lemma will be essential when we investigate the relation of our solution concepts (cf.~Theorem \ref{theorem-weak=strong}).

\begin{lemma}[\textbf{Strategies permitting segments and consistent runs}]\label{lem-whichseg} Let $\{\Omega_z\colon z\in Z\}$ be a collection of segments and let $\{r_z\colon z\in Z\}$ be a collection of runs such that $r_z\in \Omega_z$ for all $z\in Z$; these collection can be empty ($Z=\emptyset$ is allowed).
\begin{enumerate}
    \item[{[1]}] Each player $i\in I$ has a strategy $s_i$ such that:
    \begin{itemize}
        \item[{[1.i]}] For all $z\in Z$, the run $r_z$ is consistent with $s_i$.
        \item[{[1.ii]}] Let $r\in R$ be a run such that along $r$ player $i$ is active infinitely often and $r$ does not belong to any of the segments $\Omega_z$, $z\in Z$. Then, $r$ is not consistent with $s_i$.
    \end{itemize} 
    Consequently, if along each run player $i$ is active infinitely often, then $s_i$ permits a segment $\Omega$ if and only if $\Omega=\Omega_z$ for some $z\in Z$.
    \item[{[2]}] Let $r\in R$ be a run. Denote by $\Omega$ the segment of the run $r$. For each player $i\in I$, define the strategy $s_i$ as in Part 1 for the singleton collections $\{\Omega\}$ and $\{r\}$. Then, for the strategy profile $s=(s_i)_{i\in I}$, the run $r$ is the unique run that is consistent with $s$, i.e., $R(s)=\{r\}$. 
\end{enumerate}
\end{lemma}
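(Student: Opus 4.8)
The plan is to construct, for each player $i$, a strategy $s_i$ explicitly, splitting the $i$-controlled positions into those lying on a designated run and the rest. We may assume the segments $\Omega_z$ are pairwise distinct (equivalently, the runs $r_z$ are pairwise non-equivalent under $\sim$); otherwise two runs in the same segment would force conflicting actions at a shared $i$-controlled position and condition [1.i] could not hold. Since distinct segments have disjoint sets of corresponding positions, for every $i$-controlled position $p_n$ there is at most one $z$ with $p_n=(r_z)_{<n}$. Fix two distinct actions, labelled $0$ and $1$. Define $s_i(p_n)$, for $\iota(p_n)=i$, by: (a) if $p_n=(r_z)_{<n}$ for some $z\in Z$, let $s_i(p_n)$ be the action that $r_z$ plays at stage $n$; (b) otherwise, set $s_i(p_n)=0$ if every $i$-controlled stage in the past of $p_n$ carries action $1$, and $s_i(p_n)=1$ otherwise. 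This is a well-defined strategy, and [1.i] is immediate: along $r_z$, every $i$-controlled position is of the form $(r_z)_{<n}$, so rule (a) reproduces the action of $r_z$.

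The heart of the argument, and the main obstacle, is [1.ii]: rule (b) must kill \emph{every} off-path run at which $i$ is active infinitely often, simultaneously, in the spirit of Example \ref{ex-norun}. Let $r$ be such a run, with $r\notin\Omega_z$ for all $z$. First observe that no position along $r$ can be on a designated path: if $r_{<n}=(r_{z})_{<k}$ then, since positions in different stages are never equal, $k=n$ and hence $r\sim r_z$, contradicting $r\notin\Omega_z$; thus rule (b) governs $s_i$ at every $i$-controlled stage of $r$. Enumerate the $i$-controlled stages along $r$ as $\cdots<m_3<m_2<m_1$ with actions $b_1,b_2,\ldots$, and suppose toward a contradiction that $r$ is consistent with $s_i$, i.e.\ $b_k=s_i(r_{<m_k})$ for all $k$. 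By rule (b), $s_i(r_{<m_k})=0$ precisely when $b_{k+1}=b_{k+2}=\cdots=1$. If all $b_k=1$, then this condition holds for every $k$, forcing $s_i(r_{<m_k})=0\neq b_k$, a contradiction. Otherwise pick $k_0$ with $b_{k_0}\neq1$; consistency gives $b_{k_0}=s_i(r_{<m_{k_0}})\in\{0,1\}$, hence $b_{k_0}=0$, so the triggering condition holds and $b_k=1$ for all $k>k_0$. But then at the next earlier $i$-stage $m_{k_0+1}$ the actions $b_{k_0+2},b_{k_0+3},\ldots$ are all $1$, so rule (b) gives $s_i(r_{<m_{k_0+1}})=0$, whereas consistency demands $b_{k_0+1}=1$ --- again a contradiction. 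Hence $r$ is not consistent with $s_i$, proving [1.ii].

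The ``consequently'' statement follows directly: if $i$ is active infinitely often along every run and $s_i$ permits a segment $\Omega$, pick a consistent $r\in\Omega$; were $\Omega\neq\Omega_z$ for all $z$, then $r\notin\Omega_z$ for all $z$ and [1.ii] would make $r$ inconsistent, so $\Omega=\Omega_z$ for some $z$, while the converse direction is exactly [1.i]. For Part [2], apply Part 1 to the singleton collections $\{\Omega\}$ and $\{r\}$, so each $s_i$ is consistent with $r$ and hence $r\in R(s)$. Conversely let $r'\in R(s)$. If $r'\in\Omega$, then each $s_i$ permits $\Omega$ (being consistent with $r\in\Omega$), so Lemma \ref{lem-strpercons}[2] gives that $R(s)\cap\Omega$ is a singleton, whence $r'=r$. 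If $r'\notin\Omega$, then since $I$ is finite some player $i^*$ is active infinitely often along $r'$; as $r'\notin\Omega=\Omega_z$, [1.ii] for $i^*$ shows $r'$ is inconsistent with $s_{i^*}$, contradicting $r'\in R(s)$. Therefore $R(s)=\{r\}$.
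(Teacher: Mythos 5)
Your proposal is correct and follows essentially the same route as the paper: on positions lying along a designated run $r_z$ you copy that run's action, and elsewhere you play the ``$0$ if all past own actions were $1$, else $1$'' strategy of Example~\ref{ex-norun}, whose inconsistency with any run along which player $i$ is active infinitely often yields [1.ii]; Part~2 then follows from finiteness of $I$ and Lemma~\ref{lem-strpercons} exactly as in the paper. The only difference is cosmetic --- you spell out the Example~\ref{ex-norun}-style contradiction explicitly rather than citing it.
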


\begin{proof}\ \\
\noindent\textbf{Proof of Part 1:} 
Let 0 and 1 denote two distinct actions in $A$. First, we define a strategy $x_i$ for player $i$ similar to the one in Example \ref{ex-norun}. For any position $p_n\in P_n$ in any stage $n\in\dZ_0$ where $\iota(p_n)=i$:
\begin{itemize}
    \item if player $i$ always selected action 1 along $p_n$, whenever she was active, then let $x_i(p_n)=0$,
    \item otherwise, let $x_i(p_n)=1$.
\end{itemize}
Note that, as in Example \ref{ex-norun}, if along $r$ player $i$ is active infinitely often, then $r$ is not consistent with $x_i$.

Now we define a strategy $s_i$ for player $i\in I$. For any position $p_n\in P_n$ in any stage $n\in\dZ_0$ where $\iota(p_n)=i$: 
\begin{itemize}
    \item \textsc{Case 1:} Assume that $p_n=r_{z,<n}$ for some $z\in Z$; that is, $p_n$ coincides with the position along the run $r_z$ in stage $n$. Note that, in this case, there is a unique $z\in Z$ with this property, and $p_n\in P_{\Omega_z}$. In this case, define $s_i(p_n)$ to be the action along $r_z$ in stage $n$.
    \item \textsc{Case 2:} Assume the opposite: $p_n\neq r_{z,<n}$ for all $z\in Z$. In this case, define $s_i(p_n)=x_i(p_n)$.
\end{itemize}
By construction, the statement [1.i] holds. To show [1.ii], consider a run $r$ as in [1.ii]. Then, each position along $r$ where player $i$ is active belongs to Case 2, and then according to $s_i$, player $i$ plays according to the strategy $x_i$. Because player $i$ is active infinitely often along $r$, it follows that $r$ is not consistent with $s_i$, proving [1.ii].\medskip\\
\noindent\textbf{Proof of Part 2:} It follows from Part 1 that (i) the run $r$ is consistent with $s$ and 
(ii) because along each run there is a player 
$i$
who is active infinitely often, 
$\Omega$ is the only segment permitted by $s_i$.
By Part 
1 of Lemma \ref{lem-strpercons}, the proof is complete.
\end{proof}

\begin{remark}[\textbf{Permitted segments under deviations}]\label{rem-furPart2}\rm 
Consider again the strategy profile $s$ in Part 2 of Lemma \ref{lem-whichseg}. Assume in addition that, along each run, there are at least two players who are active infinitely often. Then, even if a player $i\in I$ deviates from $s_i$ to some other strategy $s'_i$, under the strategy profile $(s'_i,s_{-i})$ no consistent run can arise in a segment other than $\Omega$. Indeed, this follows from [1.ii] of Lemma \ref{lem-whichseg}. $\blacklozenge$
\end{remark}

\begin{remark}[\textbf{A continuum of consistent runs}]\label{rem-contcons}\rm 
It follows from the previous results that there is a 1-player game in which the player has a strategy that admits a continuum of consistent runs. Indeed, let $Z$ be maximal in Lemma \ref{lem-whichseg}, so that the collection $\{r_z\colon z\in Z\}$ contains exactly one run from each segment. Let $s_1$ be the strategy as in Part 1 of Lemma \ref{lem-whichseg}, so that each $r_z$ is consistent with $s_1$. As the cardinality of $Z$ is equal to the cardinality of the set of all segments, 
which is further equal to the 
cardinality of the 
continuum, $s_1$ indeed admits a continuum of consistent runs. $\blacklozenge$
\end{remark}

\begin{remark}[\textbf{Permitted segments by strategy profiles}]\rm 
Lemma~\ref{lem-whichseg} implies that it might happen that for each player $i$, the strategy $s_i$ permits some segment,
but the strategy profile $s = (s_i)_{i \in I}$ does not permit any segment.
This happens when there is no single segment that is permitted by all strategies $(s_i)_{i \in I}$. $\blacklozenge$
\end{remark}

Based on Lemma \ref{lem-strpercons}, we can define a game on each segment separately with properties that are very similar to standard games.

\begin{definition}[\textbf{Game with infinite past restricted to a segment}]\label{def-gamerestr}\rm
Let $G$ be a game with infinite past and let $\Omega$ be one of its segments. The \emph{game $G$ restricted to $\Omega$, denoted by $G_\Omega$}, is given as in Definition \ref{def-game}, with the following modifications: (1) the set of positions is $P_\Omega$, (2) the set of runs is $\Omega$, and (3) the set of strategies of each player $i\in I$ is $S_i(\Omega)$: the set of strategies that permit $\Omega$. Consequently, the set of strategy profiles is $S(\Omega):=\prod_{i\in I}S_i(\Omega)$.
\end{definition}

By part 2 of Lemma \ref{lem-strpercons},
in the game $G_\Omega$, 
for any strategy profile there is a unique consistent run,
and hence
we can define payoffs and 
equilibrium in the usual manner: In $G_\Omega$, define the payoff of each player $i\in I$ under each strategy profile $s\in S(\Omega)$ as
\begin{equation}\label{eq-payoffomega}
u_{i,\Omega}(s)\,:=\,u_i(r),
\end{equation}
where $r$ is the unique run in $\Omega$ that is consistent with $s$. A strategy profile $s\in S(\Omega)$ is a (traditional) equilibrium in $G_\Omega$ if $u_{i,\Omega}(s'_i,s_{-i})\leq u_{i,\Omega}(s)$ for each player $i\in I$ and each strategy $s'_i\in S_i(\Omega)$.

\section{Equilibrium}\label{sec-eq}

In this section we define and examine two equilibrium concepts in games with infinite past. While these two concepts are related, there is a key difference between them in how runs that are consistent with strategy profiles are treated. 

The section is structured as follows. In Section \ref{sec-defeq}, we define these two equilibrium concepts. In Section \ref{sec-examples}, we provide 
illustrating
examples. 
In Section \ref{sec-releq}, we compare these concepts in detail, and prove the main result of this section: in every game that satisfies the condition that along each run at least two players are active infinitely often, there is no difference between the two concepts as far as the existence of an equilibrium and the set of equilibrium runs are concerned. 
Finally, in Section \ref{sec-backind}, we discuss the method of backward induction in games with infinite past to determine an equilibrium.

\subsection{Two equilibrium concepts}\label{sec-defeq}

Our main equilibrium concept is a pair, consisting of a strategy profile and a run that is consistent with this strategy profile, satisfying the following natural requirement: there is no position along this run after which a player has a profitable deviation from her own strategy. 

Recall that each subgame $G[p]$, where $p\in P$ is a position, is a finite game, and hence in $G[p]$ the traditional concept of equilibrium can be defined in the usual way. Note that each strategy profile $s$ induces a strategy profile $s[p]$ in the subgame $G[p]$. 
To distinguish between the concept of equilibrium in subgames and the equilibrium concepts in games with infinite past, 
we call the former \emph{traditional} equilibrium.
 
\begin{definition}[\textbf{Equilibrium}]\label{def-weakeq}\rm 
Let $s=(s_i)_{i\in I}$ be a strategy profile and let $r\in R(s)$ be a run consistent with $s$. We call the pair $(s,r)$ an \emph{equilibrium} if, for each stage $n\in\dZ_0$, the strategy profile $s[r_{< n}]$ is a traditional equilibrium in the subgame $G[r_{< n}]$. A run $r\in R$ is called an \emph{equilibrium run} if there is a strategy profile $s$ such that $(s,r)$ is an equilibrium.
\end{definition}

As each subgame $G[p]$ is a finite game, it admits a traditional equilibrium. But the concept of an equilibrium requires more: the same strategy profile $s$ should induce a traditional equilibrium in all subgames along the run $r$, and moreover, the run $r$ should be consistent with $s$.

Thus, the concept of an equilibrium only takes into account deviations in a finite number of stages, i.e., only looks at subgames that belong to the segment $\Omega^r$ of the run~$r$. Note that $(s,r)$ is an equilibrium if and only if (i) $s$ is an equilibrium in the game $G_{\Omega^r}$ (cf.~Definition \ref{def-gamerestr} and \Eqref{eq-payoffomega}), and (ii) $r$ is the unique run consistent with $s$ in $\Omega^r$ (cf.~Part 2 of Lemma~\ref{lem-strpercons}).

\begin{remark}[\textbf{Games with tail payoff functions}]\label{rem-tail} \rm Tail payoffs have been studied recently in various classes of dynamic games 
(see for instance Ashkenazi-Golan et al. [2022] and Flesch and Solan [2023], and the references therein).
The payoff function $u_i$ of some player $i\in I$ is called \emph{tail} if changing finitely many actions in the run does not affect the payoff, 
that is, if $u_i$ is a constant on each segment. 
For example, the payoff function of player 1 in Example \ref {ex-eqnostrong} 
below 
is tail. 

It follows\footnote{This statement is driven by the fact that, after each stage, the set of remaining stages is finite. This statement would no longer be true if the set of stages were $\mathbb{Z}$, the set of all integers.} from Definition \ref{def-weakeq} that if each player's payoff function is tail, 
then $(s,r)$ is an equilibrium, for any strategy profile $s$ and any consistent run $r\in R(s)$. $\blacklozenge$
\end{remark}

The second equilibrium notion, called strong equilibrium, consists only of a strategy profile that admits a unique consistent run. Here the equilibrium property requires that, for any player, her payoff under this run be at least as much as her payoff under any run that is consistent if she  deviates from her strategy. Thus, strong equilibrium considers all segments of the game when a deviation takes place.

\begin{definition}[\textbf{Strong Equilibrium}]\label{def-strongeq}\rm 
Let $s=(s_i)_{i\in I}$ be a strategy profile that admits a unique consistent run $r\in R$, i.e., $R(s)=\{r\}$. We call $s$ a \emph{strong equilibrium} if for each player $i\in I$ and each strategy $s'_i$ of player $i$ such that $R(s'_i,s_{-i})\neq \emptyset$ we have 
    \begin{equation}\label{eq-strong}
    u_i(r')\,\leq\,u_i(r),\hspace{0.75cm}\forall r'\in R(s'_i,s_{-i}).
    \end{equation}
A run $r\in R$ is called a \emph{strong equilibrium run} if there is a strong equilibrium $s$ with $R(s)=\{r\}$.
\end{definition}

The following theorem follows by the definitions.

\begin{theorem}[\textbf{Strong equilibrium implies equilibrium}]\label{theorem-weakstrong}
Assume that $s$ is a strong equilibrium. Let $r$ be the unique run consistent with $s$, that is, $R(s)=\{r\}$. Then $(s,r)$ is an equilibrium.
\end{theorem}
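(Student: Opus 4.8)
The plan is to unwind Definition \ref{def-weakeq}. Since $R(s)=\{r\}$, the run $r$ is in particular consistent with $s$, so the only thing left to establish is that for every stage $n\in\dZ_0$ the induced profile $s[r_{<n}]$ is a traditional equilibrium in the finite subgame $G[r_{<n}]$. I would fix $n$ and verify the no-profitable-deviation condition directly. First I would record the baseline: because $r$ is consistent with $s$, following $s[r_{<n}]$ from the root $r_{<n}$ reproduces the continuation of $r$, and since a finite perfect-information game has a unique play under any pure profile, this continuation is \emph{the} play induced by $s[r_{<n}]$; hence player $i$'s payoff under $s[r_{<n}]$ is $u_i(r)$.

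Next I would take an arbitrary player $i\in I$ and an arbitrary strategy $t_i$ of player $i$ in $G[r_{<n}]$, and lift it to a global strategy $s_i'$ by letting $s_i'$ play according to $t_i$ at every position of $G[r_{<n}]$ (that is, every position in a stage $\ge n$ that extends $r_{<n}$) and according to $s_i$ at all other positions. Let $r'$ be the play induced in $G[r_{<n}]$ by $(t_i,s_{-i}[r_{<n}])$, and let $\tilde r$ be the full run that agrees with $r$ in the stages $\le n-1$ and with $r'$ from stage $n$ on.

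The key step is to check that $\tilde r$ is consistent with $(s_i',s_{-i})$, so that in particular $R(s_i',s_{-i})\ne\emptyset$. This is a position-by-position verification: at positions along $\tilde r$ in stages $\le n-1$ the position coincides with the corresponding position along $r$ and lies outside $G[r_{<n}]$, so $s_i'=s_i$ there and consistency follows from $r\in R(s)$; at positions along $\tilde r$ in stages $\ge n$ the position lies in $G[r_{<n}]$, player $i$ uses $t_i$ while the others use $s_{-i}[r_{<n}]$, so consistency follows from $r'$ being the induced play. Having established $\tilde r\in R(s_i',s_{-i})$, the strong-equilibrium inequality \Eqref{eq-strong} yields $u_i(\tilde r)\le u_i(r)$.

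Finally I would translate back: $u_i(\tilde r)$ is exactly player $i$'s payoff under the subgame deviation $(t_i,s_{-i}[r_{<n}])$, and $u_i(r)$ is her payoff under $s[r_{<n}]$, so the inequality says that no $t_i$ improves on $s[r_{<n}]$ in $G[r_{<n}]$. As $i$ and $t_i$ were arbitrary, $s[r_{<n}]$ is a traditional equilibrium in $G[r_{<n}]$; as $n$ was arbitrary, $(s,r)$ is an equilibrium. I expect the only genuine work to be the consistency bookkeeping for $\tilde r$ (the split into stages $\le n-1$ and $\ge n$) together with the observation that the finite subgame has a unique play, which pins the baseline payoff to $u_i(r)$; everything else is immediate, consistent with the theorem's billing as following ``by the definitions.'' An essentially equivalent alternative would be to invoke the characterization noted after Definition \ref{def-weakeq} — that $(s,r)$ is an equilibrium iff $s$ is an equilibrium of $G_{\Omega^r}$ and $r$ is its unique consistent run in $\Omega^r$ — and then deduce the equilibrium property in $G_{\Omega^r}$ from \Eqref{eq-strong} via Part 2 of Lemma \ref{lem-strpercons}.
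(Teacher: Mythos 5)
Your proof is correct and is exactly the argument the paper has in mind: the paper offers no written proof, stating only that the theorem ``follows by the definitions,'' and your position-by-position consistency check for the lifted deviation $\tilde r$ (splitting stages $\le n-1$ from stages $\ge n$) together with the observation that the finite subgame pins the baseline payoff to $u_i(r)$ is the correct unwinding of those definitions.
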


\subsection{Examples}
\label{sec-examples}

In this section we provide several examples that illustrate the two concepts of equilibria and some of their properties.
Example~\ref{ex-noeq} provides a
1-player game that admits no equilibrium, and therefore,
by Theorem~\ref{theorem-weakstrong}, 
no strong equilibrium either.
This property is not surprisingly, 
since the payoff functions are not required to be continuous.
Example~\ref{ex-eqnostrong} provides a
1-player game with equilibrium but no strong equilibrium,
thereby showing that the two concepts of equilibria are distinct.
Finally, Example~\ref{ex-winloseruns} provides a zero-sum game 
without a value: 
each segment separately has a value, but different segments have a different value. This is probably not unexpected,
as distinct segments do not have a common position and
the payoff function restricted to each of them
can be very different. Still, this example 
illustrates that properties that hold in normal-form games and repeated games (with infinite future) need not hold for games with infinite past. 

\begin{example}[\textbf{1-player game without equilibrium}]\label{ex-noeq}\rm 
Consider the following 1-player game. The action set is $A=\{0,1\}$. Let $\delta\in(0,1)$. Along a run $r=(\ldots,a_{-1},a_0)$, if the player always chose action 1 then the payoff is 0, and otherwise the payoff is equal to the reversed-time discounted sum
\[(1-\delta)\cdot \sum_{n=-\infty}^0 \delta^{-n}a_n\,=\,(1-\delta)\cdot(a_0+\delta a_{-1}+\delta^2 a_{-2}+\cdots);\]
note that the latter quantity always belongs to the interval $[0,1)$. 

This game admits no equilibrium. Indeed, (i) if $r$ only contains action 1, then it is a profitable deviation to play action 0 in stage 0, (ii) if $r$ contains action 0 in only one stage, say stage $n$, then it is a profitable deviation to play action 0 in stage $n-1$ and play action 1 in stage $n$, and (iii) if $r$ contains action 0 in at least 2 stages, then it is a profitable deviation to play action 1 in one of those stages. $\blacklozenge$
\end{example}

\begin{example}[\textbf{1-player game with equilibrium but no strong equilibrium}]\label{ex-eqnostrong}\rm 
Consider the following 1-player game. The action set is 
$A=\{0,1\}$. Along a run $r=(\ldots,a_{-1},a_0)$, let the limsup-frequency of action 1 be denoted by
\[\phi(r)\,=\,\limsup_{n\to-\infty}\frac{1}{-n+1}(a_{n}+\cdots+a_{-1}+a_0).\]
The payoff for a run $r\in R$ is as follows: $u_1(r)=\phi(r)$ if  $\phi(r)<1$, and $u_1(r)=0$ if  $\phi(r)=1$. 

In this game, $(s,r)$ is an equilibrium for any strategy $s$ and consistent run $r$. Indeed, given any stage $n\in\dZ_0$, the player is unable to change the limsup-frequency of action 1 by deviating only after stage $n$ (cf.~Remark \ref{rem-tail}).

However, there is no strong equilibrium. Indeed, let $s$ be any strategy and $r\in R(s)$. Then, $u_1(r)<1$, and hence there is a run $\overline r\in R$ such that $u_1(\overline r)>u_1(r)$.
By Part 2 of Lemma \ref{lem-whichseg}, there is a strategy $\overline{s}$ such that $R(\overline s)=\{\overline r\}$. 
This means that $s$ is not a strong equilibrium. $\blacklozenge$
\end{example}

\begin{remark}[\textbf{$\ep$-equilibria}]\rm 
As is common when the payoff function is not continuous,
one could define the concepts of $\ep$-equilibria and strong $\ep$-equilibria
in analogy to those of equilibria and strong equilibria.
In one-player games with infinite past,
a strong $\ep$-equilibrium always exists.
Indeed, any run that maximizes the payoff up to $\ep$
induces a strong $\ep$-equilibrium,
and hence also an $\ep$-equilibrium.
As we will see below, when at least two players participate in the game,
$\ep$-equilibria may fail to exist when $\ep$ is sufficiently small. $\blacklozenge$
\end{remark}

\begin{example}[\textbf{A zero-sum game without a value}]\label{ex-winloseruns}\rm 
We consider a two-player zero-sum game $G$ with infinite past in which player 1 is the active player in even stages and player 2 is active in odd stages. The action set is $A=\{0,1\}$. Let $W$ denote the set of runs in which player 2 played action~1 at least once. The payoff is $u_1(r)=1$ and $u_2(r)=-1$ if $r\in W$, and $u_1(r)=-1$ and $u_2(r)=1$ if $r\notin W$. Note that the payoffs are independent of player 1's actions.%
\footnote{The game is in fact a win-lose game, and such games will be studied in Section \ref{sec-winlose}.}

Let $s^0$ (resp., $s^1$) be the strategy profile in which each player always plays action 0 (resp., $1$), and let the run $r^0$ (resp., $r^1$) consist of action 0 only (resp., $1$). Then, both $(s^0,r^0)$ and $(s^1,r^1)$ are equilibria with $r^0\notin W$ and $r^1\in W$. 
While the first equilibrium is strong, the second is not. 
Below (Theorem~\ref{theorem-weak=strong})
we will show that in fact there is a strong equilibrium $\widehat s^1$ such that $r^1$ is the unique run consistent with $\widehat s^1$.
It follows that the zero-sum game admits no value, and hence, no strong value either.

Yet, each segment separately does have a value:\footnote{For a precise discussion of the concept of the value in a segment, we refer to Section \ref{sec-segwinlose}.} for each segment $\Omega$, the value of the game $G_\Omega$ is equal to 1 or $-1$. Indeed, in each segment $\Omega$, either along every run player 2 played action 1 only finitely often (such as in the segment of $r^0$), or along every run player 2 played action 1 infinitely often (such as in the segment of $r^1$). In the former case, the value of $G_\Omega$ is $-1$, as player 2 can always play action 0. In the latter case, the value of $G_\Omega$ is 1, as every run belongs to $W$. 
$\blacklozenge$
\end{example}

\subsection{Relation between the two equilibrium concepts}\label{sec-releq}

In this section we compare the concepts of equilibrium and strong equilibrium. 

The next theorem shows a converse of Theorem \ref{theorem-weakstrong}. It states that 
the existence of an equilibrium implies the existence of a strong equilibrium with the same equilibrium run, in related strategies, under the mild condition that along each run at least two players are active infinitely often. This condition is essential: recall from  Example \ref{ex-eqnostrong} that when along some runs only one player is active infinitely often,
the game may admit an equilibrium but no strong equilibrium.

\begin{theorem}[\textbf{Equilibrium implies strong equilibrium}]\label{theorem-weak=strong}
Assume that along each run at least two players are active infinitely often.

Let $(s,r)$ be an equilibrium. Let $\widehat s$ be the strategy profile as in Part 2 of Lemma \ref{lem-whichseg} for the run $r$. Further, let $s^*$ be the strategy profile that coincides with $\widehat s$, except if the play follows $r$ until some stage $n$, where a player deviates from $r$ (or, equivalently, from $\widehat s$). In that case, $s^*$ switches to $s$ for the remaining stages $n+1,\ldots,-1,0$.  

Then, $s^*$ is a strong equilibrium and $r$ is the corresponding equilibrium run.
\end{theorem}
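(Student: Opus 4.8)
The plan is to verify directly the two defining requirements of a strong equilibrium for $s^*$ (Definition \ref{def-strongeq}): that $R(s^*)=\{r\}$, and that no unilateral deviation produces a consistent run with strictly higher payoff for the deviator. Everything rests on one structural observation about $s^*$, which I would record first. Writing $\Omega$ for the segment of $r$, I claim that $s^*$ coincides with $s$ at every position in $P_\Omega$ and with $\widehat s$ at every position outside $P_\Omega$. Indeed, a position in $P_\Omega$ differs from the corresponding position along $r$ in only finitely many actions; if it lies off $r$, then it agrees with $r$ up to its earliest (most negative) deviation stage, so it is reached by ``following $r$ and then deviating,'' where by definition $s^*$ uses $s$; and on $r$ itself $\widehat s$ prescribes exactly the actions of $r$ (by the construction in Part 1 of Lemma \ref{lem-whichseg}), which agree with $s$ because $r\in R(s)$. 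Conversely, a position outside $P_\Omega$ differs from $r$ in infinitely many stages, so it was never reached by a finite deviation from $r$, and there $s^*=\widehat s$.

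Granting this, $R(s^*)=\{r\}$ would follow by splitting runs by segment. Any run outside $\Omega$ visits only positions where $s^*=\widehat s$, and since $R(\widehat s)=\{r\}\subseteq\Omega$ by Part 2 of Lemma \ref{lem-whichseg}, no such run is consistent with $s^*$. Within $\Omega$ every position carries $s^*=s$; since $r\in R(s)\cap\Omega$, each $s_i$ permits $\Omega$, so Part 2 of Lemma \ref{lem-strpercons} gives $R(s)\cap\Omega=\{r\}$, and hence $R(s^*)\cap\Omega=\{r\}$ as well. Combining the two cases yields $R(s^*)=\{r\}$.

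For the deviation inequality \eqref{eq-strong}, I would fix a player $i$, a strategy $s_i'$ with $R(s_i',s^*_{-i})\neq\emptyset$, and a run $r'\in R(s_i',s^*_{-i})$. The first step is to show $r'\in\Omega$. If $r'$ belonged to a segment $\Omega'\neq\Omega$, it would visit only positions outside $P_\Omega$, where $s^*_{-i}=\widehat s_{-i}$; by hypothesis at least two players are active infinitely often along $r'$, so some player $j\neq i$ is, and applying [1.ii] of Lemma \ref{lem-whichseg} to $\widehat s_j$ (which permits only $\Omega$) shows $r'$ is inconsistent with $\widehat s_j=s^*_j$, a contradiction — this is exactly Remark \ref{rem-furPart2}. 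Once $r'\in\Omega$, the identity $s^*=s$ on $P_\Omega$ makes $r'$ consistent with $(s_i',s_{-i})$ along all of its positions. I would then pick a stage $n$ with $r'_{<n}=r_{<n}$ (available since $r,r'\in\Omega$) and pass to the finite subgame $G[r_{<n}]$: there $s[r_{<n}]$ induces $r$, while the deviation profile $(s_i'[r_{<n}],s_{-i}[r_{<n}])$ induces exactly $r'$, because a finite subgame assigns a unique play to each profile and $r'$ is the consistent one. Since $(s,r)$ is an equilibrium, $s[r_{<n}]$ is a traditional equilibrium in $G[r_{<n}]$, so player $i$ gains nothing: $u_i(r')\leq u_i(r)$. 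As $r'$ was arbitrary, $s^*$ is a strong equilibrium with unique run $r$ (so $r$ is its equilibrium run, and by Theorem \ref{theorem-weakstrong} an equilibrium run as well).

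The main obstacle is the opening move of the deviation argument: ruling out consistent runs in other segments after player $i$ has deviated. The blocking device $\widehat s_i$ cannot be invoked once $i$ abandons it, and this is precisely where the assumption that two players are active infinitely often becomes indispensable, as it furnishes a second player $j$ whose blocking strategy $\widehat s_j$ is untouched by $i$'s deviation. The remaining steps are routine bookkeeping built on the identity $s^*=s$ on $P_\Omega$ and the finiteness of subgames, the latter being what lets the finite-game equilibrium condition translate into a comparison of the infinite-game payoffs $u_i(r')$ and $u_i(r)$.
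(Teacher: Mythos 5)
Your proof is correct and follows essentially the same route as the paper: you rule out consistent runs in other segments after a unilateral deviation by invoking the second infinitely-active player and property [1.ii] of Lemma \ref{lem-whichseg} (i.e., Remark \ref{rem-furPart2}), and then reduce the payoff comparison to the traditional equilibrium property of $s$ in a finite subgame $G[r_{<n}]$ with $r_{<n}=r'_{<n}$. The only cosmetic differences are that you make the identity ``$s^*=s$ on $P_\Omega$, $s^*=\widehat s$ off $P_\Omega$'' explicit and use an arbitrary common-prefix stage rather than the earliest stage at which $r$ and $r'$ differ; both are sound.
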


\begin{proof}
By the choice of $\widehat s$, we have $R(\widehat s)=\{r\}$. 
Since $s^*$ follows $\widehat s$ unless 
a deviation occurs from $r$ (and thus from $\widehat s$), we also have $R(s^*)=\{r\}$.

It remains to show that $s^*$ is a strong equilibrium. So, suppose that a player $i\in I$ deviates from $s^*_i$ to some strategy $s'_i$. Let $r'$ be a run consistent with $(s'_i,s^*_{-i})$, that is, $r'\in R(s'_i,s^*_{-i})$, and suppose that $r'\neq r$. We need to show that $u_i(r')\leq u_i(r)$.

Let $\Omega$ denote the segment of $r$. Because by assumption, along each run at least two players are active infinitely often, by Remark \ref{rem-furPart2}, no run in $R\setminus \Omega$ is consistent with $(s'_i,\widehat s_{-i})$. Because $s^*$ coincides with $\widehat s$ along the runs in $R\setminus \Omega$, no run in $R\setminus \Omega$ is consistent with $(s'_i,s^*_{-i})$. Thus, the runs $r$ and $r'$ both belong to the same segment $\Omega$. 

Let $n$ denote the earliest stage in which the actions in $r$ and $r'$ differ. Then, player $i$ is the active player in the position $r_{<n}=r'_{<n}$. Moreover, along $r'$, after player $i$'s action in stage $n$, the strategy profile $s^*_{-i}$ coincides with $s_{-i}$ in stages $n+1,\ldots,-1,0$. Because $(s,r)$ is an equilibrium, we have $u_i(r')\leq u_i(r)$.
\end{proof}

The following result follows from Theorems \ref{theorem-weakstrong} and \ref{theorem-weak=strong}.

\begin{theorem}[\textbf{Equilibrium runs coincide with strong equilibrium runs}]\label{theorem-runsequal}
Assume that along each run at least two players are active infinitely often. Then, the set of equilibrium runs coincides with the set of strong equilibrium runs. 
\end{theorem}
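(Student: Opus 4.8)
The plan is to prove the set equality by establishing two inclusions, both of which are essentially immediate consequences of the two theorems we may invoke. Let me denote by $E$ the set of equilibrium runs and by $E^*$ the set of strong equilibrium runs.

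First I would prove the inclusion $E^* \subseteq E$. Take any strong equilibrium run $r \in E^*$. By definition, there is a strong equilibrium $s$ with $R(s) = \{r\}$. By Theorem \ref{theorem-weakstrong}, the pair $(s,r)$ is then an equilibrium, and hence $r$ is an equilibrium run, i.e.\ $r \in E$. Note that this inclusion does not even require the hypothesis that along each run at least two players are active infinitely often; it holds in full generality.

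Next I would prove the reverse inclusion $E \subseteq E^*$, and this is where the standing assumption is used. Take any equilibrium run $r \in E$. By definition there is a strategy profile $s$ such that $(s,r)$ is an equilibrium. Now I invoke Theorem \ref{theorem-weak=strong}, whose hypothesis (along each run at least two players are active infinitely often) is exactly the assumption of the present statement. That theorem produces a strategy profile $s^*$ which is a strong equilibrium and whose unique consistent run is $r$. Therefore $r$ is a strong equilibrium run, i.e.\ $r \in E^*$.

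Combining the two inclusions gives $E = E^*$, as desired. I do not anticipate a genuine obstacle here, since both directions reduce to a direct citation of the preceding theorems; the only thing to be careful about is to phrase the argument at the level of \emph{runs} rather than strategy profiles (a strong equilibrium and the equilibrium it induces need not use the same strategy profile, but they share the same run, which is precisely what the set-equality statement concerns). The single point worth flagging explicitly is that the hypothesis is needed only for the inclusion $E \subseteq E^*$, whereas $E^* \subseteq E$ is unconditional.
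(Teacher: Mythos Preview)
Your proof is correct and follows exactly the paper's approach: the paper simply states that the result follows from Theorems \ref{theorem-weakstrong} and \ref{theorem-weak=strong}, and you have spelled out the two inclusions that make this explicit. Your additional observation that the inclusion $E^* \subseteq E$ is unconditional (while $E \subseteq E^*$ requires the hypothesis) is accurate and a nice clarification.
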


\subsection{Backward induction and 
games with infinite past
}\label{sec-backind}

In this section, first we examine to which extent backward induction can be used in games with infinite past to find an equilibrium. Then, we explain our proof techniques, and compare them with backward induction.\medskip

In standard perfect information games with finite horizon, backward induction is known to provide a traditional (subgame perfect) equilibrium. 
One may wonder to which extent backward induction can be used in games with infinite past to determine an equilibrium. Note that, in a game with infinite past, the set of stages is infinite, and hence backward induction needs to be executed in an infinite number of iteration steps in the stages $0,-1,-2,\ldots$. 
Below we will argue 
that 
this causes quite a few difficulties.

What is always true is that the backward induction procedure ends up with a strategy profile, say $s$, such that $s$ induces a traditional equilibrium in each subgame. And whenever this strategy profile $s$ admits a consistent run $r$, then $(s,r)$ is an equilibrium according to Definition \ref{def-weakeq}.

But as we have already seen, strategy profiles do not always admit a consistent run, and this is also true for strategy profiles that are determined by backward induction. Indeed, for instance, the strategy in Example \ref{ex-norun} is a backward induction outcome when the player's payoff function is constant. Or, consider Example~\ref{ex-noeq}. In this game, backward induction leads to a unique strategy, which turns out to be identical to the strategy in Example~\ref{ex-norun}: in a position $p$, play action 0 if $p$ only contains action 1, and play action 1 otherwise. Hence, backward induction does not lead to a consistent run.

Since the existence of a consistent run is a main issue, let us approach this question closer from the viewpoint of runs. For each stage $n\in\dZ_0$, let $B_n$ denote the set of runs $r=(\ldots,a_{-1},a_0)$ such that $(a_n,\ldots,a_{-1},a_0)$ is a backward induction play in the subgame starting at the position $r_{<n}=(\ldots,a_{n-2},a_{n-1})$. Note that each $B_n$ is nonempty, and the sequence $B_n$ is nested: $B_0\supseteq B_1\supseteq \cdots$. If the sets $B_n$ are also closed (and hence compact), then Cantor's intersection theorem\footnote{Cantor's intersection theorem states that every nonincreasing nested sequence of nonempty compact subsets of a Hausdorff topological space has a nonempty intersection.} implies that the intersection $B^*=\bigcap_{n=0}^{-\infty}B_n$ is nonempty, and it follows by construction that each element of $B^*$ is an equilibrium run. This idea however fully hinges upon on the closedness of the sets $B_n$.

To illustrate this, consider again Example \ref{ex-noeq}, where backward induction delivers a unique action in each position. In this example, 
$B_n$ contains the following runs:
\begin{itemize}
    \item the runs $(\ldots,a_{n-2},a_{n-1},1,1,\ldots,1)$ where $(\ldots,a_{n-2},a_{n-1})\neq(1\ldots,1,1)$,
    \item the run $(\ldots,1,1,0,1,\ldots,1)$ that contains action 0 only in stage $n$. 
\end{itemize}
In this case, the sets $B_n$ are not closed. Indeed, for each $m<n$, the run $r^m$ that contains action 0 only in stage $m$ belongs to $B_n$, but the limit of $r^m$ as $m\to-\infty$, the run that only contains action 1, does not belong to $B_n$.\medskip

\noindent\textbf{Our proof technique.} In the proofs of our existence results, we will regularly use Cantor’s intersection theorem, albeit somewhat differently. Let us denote by $E_n$ the set of runs $r=(\ldots,a_{-1},a_0)$ such that $(a_n,\ldots,a_{-1},a_0)$ coincides with an equilibrium play in the subgame starting at the position $r_{<n}=(\ldots,a_{n-2},a_{n-1})$. Since we are only interested in equilibria, we do not need to require that $(a_n,\ldots,a_{-1},a_0)$ is a backward induction play; thus $B_n\subseteq E_n$. 
Then, for each $n\in\dZ_0$, we define a subset $R_n\subseteq E_n$ such that, under the assumptions of the respective existence result, $R_n$ is nonempty and closed, and the sequence $R_n$ is nested: $R_0\supseteq R_1\supseteq \cdots$. This makes sure that, by Cantor’s intersection theorem, the intersection $R^*=\bigcap_{n=0}^{-\infty}R_n$ is nonempty and that each run $r\in R^*$ can be supplemented with a strategy profile $s$ such that $(s,r)$ is an equilibrium. 

\section{Two-player win-lose games}\label{sec-winlose}

As mentioned in the introduction,
two-player win-lose games,  
are a specific class of zero-sum games, 
which are frequently studied in the game-theoretic literature, and they play a major role in the field of descriptive set theory. In this section we examine these games in the presence of infinite past. 

In a two-player win-lose game, there are only two outcomes: either player 1 wins and player 2 loses, or player 1 loses and player 2 wins. 
In such a game with infinite past, we can 
divide the set of runs into 
two: we let $W$ denote the set of all runs where player~$1$ wins
(so that player~$2$ wins at all runs in $R \setminus W$).

In standard games where the set of stages is the set $\dN$ of natural numbers, a landmark result of Martin [1975] showed that an equilibrium exists\footnote{Or, in terms of zero-sum games, one of the players has a winning strategy.} whenever the winning set $W$ is Borel-measurable. In this section we show a sharp contrast for games with infinite past:
\begin{itemize}
    \item An equilibrium exists as long as the winning set $W$ has Borel-rank at most 2 (cf.~Theorem \ref{theorem-rank2}). Our proof relies on a result when the winning set $W$ has Borel-rank 1 (cf.~Lemma \ref{theorem-open}), where we prove some useful properties for equilibria beside their existence.
    \item An equilibrium does not always exist when the winning set $W$ has Borel-rank 3 or higher (cf.~Example \ref{ex-counter}). The reason is, intuitively, as follows: Suppose that each player prefers to play a specific action $a^*$, but she only wants to play it finitely many times. 
    If both players play $a^*$ finitely many times, then the winner is the player who started to play $a^*$ earlier than the other player. This type of constructions yield winning sets of Borel-rank 3, but they lead to no equilibrium.\footnote{In standard games where the set of stages is the set $\dN$ of natural numbers, the following fairly general statement holds: if each player's payoff function has finite range, regardless of whether the payoff functions are Borel-measurable, there always exists a strategy profile satisfying the one-deviation principle, cf.~Flesch et al. [2010]. Example \ref{ex-counter} thus also shows that a similar results does not hold in games with infinite past.}
\end{itemize}

The section is structured as follows.
First, we define two-player win-lose games and state some of their basic properties. 
Then, in Section \ref{sec-rev}, we define reversed positions and their relations to open winning sets. In Section \ref{sec-open} we prove an auxiliary result for two-player win-lose games with an open winning set (cf.~Lemma \ref{theorem-open}). In Sections \ref{sec-rank2} and \ref{sec-rank3} we state and prove 
the main results of this section (cf.~Theorems \ref{theorem-rank2} and Example \ref{ex-counter}).

\begin{definition}[\textbf{Two-player win-lose game with infinite past}]\label{ded-winlose}\rm 
\label{definition:win lose game}
A game with infinite past is called a \emph{two-player win-lose game} if:
\begin{itemize}
    \item[(1)] There are two players who move alternately; that is, $I=\{1,2\}$, player 1 is active in even stages, and player 2 is active in odd stages.
    \item[(2)] There is a set $W\subseteq R$ such that for each run $r\in R$: if $r\in W$ then $u_1(r)=1$ and $u_2(r)=0$, while if $r\in R\setminus W$ then $u_1(r)=0$ and $u_2(r)=1$. 
    The set $W$ is called the \emph{winning set} of player 1 and the set $R\setminus W$ is called the winning set of player 2.
    It is also common to call $W$ the winning set of the game.
\end{itemize}
\end{definition}

\begin{remark}[\textbf{On the assumption of alternating moves}]\rm 
Following the literature of two-player win-lose games (cf.~Kechris [2012]), we assume in Definition \ref{ded-winlose} that the players move alternately. In particular, this implies that each player is active infinitely often along each run.%
\footnote{And vice versa, as long as each player is active infinitely often along each run, it is without loss of generality to assume that the players play alternately. 
Indeed, to ensure that this condition holds, one can add dummy stages  
in which the player's action choices do not affect the outcome, and the addition of these dummy stages does not affect the rank of the winning set in the Borel hierarchy, because an open winning set remains open after this operation.} 
Hence, we can make use of Theorem \ref{theorem-runsequal} showing that equilibrium runs and strong equilibrium runs coincide in these games.
In the remainder of the section, we focus on the existence of equilibrium and equilibrium runs.
\end{remark}

%\color{red}
%\begin{remark}\rm
%Note that in Definition~\ref{definition:win lose game},
%the condition that the players play alternately is without loss of generality, because one can add dummy stages to ensure that this condition hold.
%The addition of these dummy stages does not affect the rank of the winning set in the Borel hierarchy,
%because an open winning set remains open after this operation.
%\end{remark}
%\color{black}

%\ES{THE ABOVE REMARK REPLACES THE NEXT ONE.
%In the remark below,
%Why is it necessary that each player plays i.o. in the original game?}

%\begin{remark}\rm Due to the results in Section \ref{sec-eq}, we assume that, in a two-player win-lose game, along each run both players are active infinitely often. And then, by adding dummy stages if necessary, we can assume that the players play in turns, without loss of generality.
%\end{remark}

\begin{definition}[\textbf{Winning position}]\label{def-winningpos}\rm 
Consider a two-player win-lose game, and a position $p_n$ in some stage $n\in\dZ_0$. We call the position $p_n$ \emph{winning} for player 1 if, in the subgame $G[p_n]$, player 1 has a winning strategy: by playing this strategy in\footnote{Recall that player 1 is only active in the even stages of the subgame.} stages $n,\ldots,-1,0$, player 1 ensures that the induced run (which starts with $p_n$) belongs to $W$, regardless of the strategy of player 2. A winning position for player 2 is defined similarly.
\end{definition}

In the next lemma, Part~1 follows from the fact that each subgame is a finite game, and Part~2 follows from the definition of winning positions.
\color{black}

\begin{lemma}[\textbf{Properties of winning positions}]
\label{lem-winpos}\ 
\begin{itemize}
    \item[(1)] Every position is winning for exactly one of the players. 
    \item[(2)] Assume that a position $p_n\in P_n$ in stage $n<0$ is winning for player $i\in\{1,2\}$. If player~$i$ is the active player at $p_n$, 
    then player $i$ has an action $a_n\in A$ such that by playing this action the next position $(p_n,a_n)$ is also winning for player $i$. If 
    player $j\neq i$
    is the active player at $p_n$, 
    then regardless of which action $a_n\in A$ player $j$ plays, the next position $(p_n,a_n)$ is still winning for player $i$.
    The same statements hold for every position $p_0\in P_0$, the action $a_0$, and the resulting run $(p_0,a_0)$.
\end{itemize}
\end{lemma}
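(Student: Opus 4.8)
The plan is to prove the two parts separately, both leaning on the observation that each subgame $G[p_n]$ is a finite perfect-information game played over the finitely many stages $n, n+1, \ldots, 0$.

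\medskip

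\noindent\textbf{Part (1).} First I would note that in the finite subgame $G[p_n]$, the outcome set is partitioned into a win for player~1 and a win for player~2. Since $G[p_n]$ is a finite two-player win-lose game of perfect information, the Zermelo-type determinacy theorem for finite games applies: exactly one of the two players has a winning strategy in $G[p_n]$. This can be seen by a straightforward backward induction over the finitely many stages $n, \ldots, 0$, labeling each position of the subtree as winning for player~1 or for player~2 according to whether the active player can steer toward, or the opponent cannot avoid, the relevant outcome set. Because the subgame terminates after finitely many moves, this induction is well-founded. Thus the position $p_n$ is winning for exactly one of the two players, which is precisely the claim. I do not expect any real obstacle here, as it is the classical finite determinacy argument.

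\medskip

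\noindent\textbf{Part (2).} This is essentially the one-step unfolding of the winning condition at an interior position $p_n$ with $n < 0$, so that a successor position $(p_n, a_n)$ exists. Suppose $p_n$ is winning for player~$i$, and let $\sigma$ be a winning strategy for player~$i$ in $G[p_n]$. If player~$i$ is active at $p_n$, then I would take $a_n := \sigma(p_n)$; the strategy $\sigma$ restricted to the subtree rooted at $(p_n, a_n)$ is still a winning strategy for player~$i$ in $G[(p_n, a_n)]$, because every run passing through $(p_n, a_n)$ that is consistent with $\sigma$ from $p_n$ onward lands in the outcome set guaranteed by $\sigma$. Hence $(p_n, a_n)$ is winning for player~$i$. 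If instead the opponent $j \neq i$ is active at $p_n$, then for \emph{any} action $a_n$ the opponent plays, the restriction of $\sigma$ to the subtree below $(p_n, a_n)$ remains winning for player~$i$ in $G[(p_n, a_n)]$: the winning strategy $\sigma$ guarantees the outcome against all opponent choices, in particular against the choice $a_n$, so whatever continuation follows is still covered by $\sigma$. Therefore $(p_n, a_n)$ is winning for player~$i$ regardless of $a_n$.

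\medskip

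The terminal case, where $p_0 \in P_0$ and the single remaining action $a_0$ produces the run $(p_0, a_0)$, is handled identically: a winning strategy in $G[p_0]$ is just a choice of action for the active player (if it is player~$i$) or a guarantee over all actions of the opponent (if it is player~$j$), and the resulting run membership in $W$ or $R \setminus W$ is immediate. The only point requiring care is to keep straight which player is active at $p_n$, dictated by the parity of $n$ under the alternating-move convention, but this is bookkeeping rather than a genuine obstacle. Overall, both parts reduce to the finiteness of each subgame, and I expect no essential difficulty.
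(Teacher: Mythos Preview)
Your proposal is correct and follows exactly the same approach as the paper, which simply remarks that Part~(1) follows from the fact that each subgame $G[p_n]$ is a finite game (hence determined) and Part~(2) follows from the definition of winning positions. You have merely spelled out in more detail the Zermelo-type argument and the one-step unfolding that the paper leaves implicit.
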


The next lemma relates equilibrium to winning positions.

\begin{lemma}[\textbf{Equilibrium and winning positions}]
\label{lemma-howwins}\ 
\begin{itemize}
    \item[(1)] Assume that $(s,r)$ is an equilibrium. If $r\in W$, then each position along $r$, i.e., $r_{<n}$ for all $n\in\dZ_0$, is winning for player 1. Similarly, if $r\notin W$, then each position along $r$ is winning for player 2.
    \item[(2)] If $r\in W$ and player 1 is winning in each position along the run $r$, 
    then there is a strategy profile $s$ such that $(s,r)$ is an equilibrium. Similarly, if player 2 is winning in each position along $r$, then there is a strategy profile $s$ such that $(s,r)$ is an equilibrium.\footnote{In the first claim of Part 2, we need to assume that $r\in W$. Indeed, because a run is not a position, the fact that the position in stage 0 is winning for player 1 does not imply that player 1 plays a winning action $a_0$ along the run $r$. In the second claim of Part 2, it always holds that $r\notin W$. Indeed, because the position in stage 0 is winning for player 2, no matter the action $a_0$ of player 1 in stage 0, the run $r$ is winning for player~2.}
\end{itemize}
\end{lemma}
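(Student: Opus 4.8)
The plan is to exploit the fact that each subgame $G[r_{<n}]$ is a finite win-lose game, combined with the characterization of equilibrium in Definition \ref{def-weakeq}. The linchpin observation, which I would isolate first, is the following: in any finite two-player win-lose game, exactly one player has a winning strategy (Part 1 of Lemma \ref{lem-winpos}), and in every traditional equilibrium the player holding the winning strategy necessarily wins. This last claim is immediate --- if the player with the winning strategy were losing in some equilibrium, she could deviate to her winning strategy and raise her payoff from $0$ to $1$, contradicting the equilibrium property.

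For Part (1), I would argue by contradiction. Suppose $(s,r)$ is an equilibrium with $r\in W$, and fix a stage $n$. By Definition \ref{def-weakeq}, $s[r_{<n}]$ is a traditional equilibrium in $G[r_{<n}]$, and the play it induces is exactly the continuation of $r$ from stage $n$ (the unique play consistent with a deterministic profile in a finite perfect-information game); since $r\in W$, this play is a win for player 1. If $r_{<n}$ were winning for player 2, then by the linchpin observation player 2 would win in this equilibrium, contradicting $r\in W$. Hence $r_{<n}$ is winning for player 1. The case $r\notin W$ is entirely symmetric, interchanging the roles of the two players.

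For Part (2), I would construct the profile $s$ explicitly. Assume $r\in W$ and every position $r_{<n}$ is winning for player 1. I define $s_1$ to play, at each position winning for player 1, an action that keeps the next position winning for player 1 --- such an action exists by Part 2 of Lemma \ref{lem-winpos}, and along $r$ I take it to be the action prescribed by $r$ itself, which is legitimate since $r_{<n+1}$, being a position along $r$, is again winning for player 1, and at stage $0$ the action of $r$ yields a run in $W$. At positions winning for player 2, $s_1$ plays arbitrarily; and I define $s_2$ to follow $r$ along $r$ and play arbitrarily elsewhere, so that $r$ is consistent with $s$ by construction. It then remains to check that $s[r_{<n}]$ is a traditional equilibrium in each $G[r_{<n}]$. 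The key point is that $s_1$ restricted to this subgame is a winning strategy from the winning position $r_{<n}$: every reached position stays winning for player 1 (using Part 2 of Lemma \ref{lem-winpos} both for player 1's own moves and for player 2's moves), and player 1 closes with a winning action at stage $0$. Thus player 1 attains payoff $1$ regardless of player 2, so player 1 cannot improve, and player 2, facing a winning strategy, can never reach payoff $1$ either. The symmetric construction handles the second claim; here one notes that $r\notin W$ holds automatically, since at stage $0$ (controlled by player 1) every action leaves the run in player 2's winning set by the run version of Part 2 of Lemma \ref{lem-winpos}.

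The main obstacle I anticipate is not any single deduction but the simultaneous bookkeeping in Part (2): one must ensure that a \emph{single} profile $s$ is consistent with $r$ and that its restriction to \emph{every} subgame along $r$ is a traditional equilibrium. The clean way through is the observation that ``playing a winning action everywhere'' produces a strategy that is winning in all these subgames at once, so the traditional-equilibrium verification collapses uniformly to the two-line payoff argument above rather than a separate, stage-by-stage check.
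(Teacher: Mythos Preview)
Your proposal is correct and follows essentially the same approach as the paper. For Part~(1) the paper simply says ``follows from the definitions,'' and your contradiction argument is precisely the unpacking of that phrase; for Part~(2) the paper constructs $s$ by having both players follow $r$ and having player~1 switch to a winning strategy in the remaining subgame as soon as player~2 deviates, which is the same idea as your ``play a winning action at every position winning for player~1'' construction---the only cosmetic difference is that you phrase $s_1$ globally rather than as a trigger, but since every position reached in a subgame $G[r_{<n}]$ remains winning for player~1 (by Lemma~\ref{lem-winpos}), the two descriptions coincide on all relevant positions.
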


\begin{proof}
Part 1 follows from the definitions. 

We next prove the first statement in Part 2; the proof of the second statement is similar. Assume that $r\in W$ and player 1 is winning in each position along $r$. Let the strategy profile $s$ be as follows: (1) $s$ recommends the action corresponding to $r$ in each position along $r$, and (2) as soon as player 2 deviates from $r$, player 1 continues with a winning strategy in the remaining subgame. 
In runs that are not in the segment that contains $r$,
the strategy profile $s$ is defined arbitrarily. 
Part 2 of Lemma \ref{lem-winpos} ensures that such a winning strategy indeed exists. Then, $(s,r)$ is an equilibrium.
\end{proof}

Lemma \ref{lemma-howwins} identifies the key step, and highlights the main difficulty, in constructing an equilibrium in two-player win-lose games: we need to find a run along which the same player is winning in all positions. 

\subsection{Reversed positions and open winning sets}\label{sec-rev}

We define reversed positions, which are strongly related to the product topology of the set of runs and simplify the discussion of subgames.

\begin{definition}[\textbf{Reversed position}]
A \emph{reversed position} in stage $n\in\dZ_0$ is a finite sequence $q_n=(a_n,\ldots,a_{-1},a_0)$ of actions. 
\end{definition}
\color{black}

Let $Q_n$ denote the set of all reversed positions in stage $n$, and let $Q=\bigcup_{n\in\dZ_0}Q_n$ be the set of all reversed positions. For a reversed position $q_n\in Q_n$, let $[q_n]$ denote the set of runs $r\in R$ that end with the sequence $q_n$ in stages $n,\ldots,-1,0$:
\[[q_n]\,:=\Big\{r=(\ldots,a_{-1},a_0)\in R\colon (a_n,\ldots,a_{-1},a_0)=q_n\Big\};\] such a set $[q_n]$ is often called a \emph{cylinder set}.

Suppose that $W\subseteq R$ is an open set of runs in the product topology. Then, $W$ is a union of cylinder sets.
That is,
\begin{equation}
\label{eq-Wunion}
W = \bigl\{ [q] \colon q \in \widehat Q\bigr\},
\end{equation}
for some set $\widehat Q\subseteq Q$ of reversed positions.%
\footnote{Note that generally the set $\widehat Q$ is not uniquely defined.}
For  each $n\in\dZ_0$, set $\widehat Q_n = \widehat Q \cap Q_n$ be the set of reversed positions of $\widehat Q$ in stage $n$.
Thus, a run $r=(\ldots,a_{-1},a_0)\in R$ belongs to $W$ if and only if $r$ ends with a reversed position in $\widehat Q$: for some stage $n\in\dZ_0$ we have $(a_n,\ldots,a_{-1},a_0)\in\widehat Q_{n}$.
For each $n\in\dZ_0$, let 
\begin{equation}\label{eq-whQ_n}
\widehat Q_{\geq n}\,=\,\widehat Q_n\cup\cdots\cup\widehat Q_{-1}\cup\widehat Q_0.
\end{equation}

\begin{definition}[\textbf{The auxiliary game $\widehat G_{\geq n}$}]
Given 
a set of reversed positions $\widehat Q\subseteq Q$
and $n\in\dZ_0$, 
define
an \emph{auxiliary game} $\widehat G_{\geq n}$ as follows. For each $n\in\dZ_0$, let 
$\widehat G_{\geq n}$ be the game played in stages $n,\ldots,-1,0$ in which:
\begin{itemize}
\item Player 1 is active in even stages and player 2
is active in odd stages (like in the original two-player win-lose game $G$).
\item Player 1 wins along $(a_n,\ldots,a_{-1},a_0)$ if it ends with a reversed position in $\widehat Q_{\geq n}$: for some $k\in\{n,\ldots,-1,0\}$ we have $(a_k,\ldots,a_{-1},a_0)\in\widehat Q_{k}$; and player 2 wins otherwise.
\end{itemize}
\end{definition}
\color{black}

We emphasize that $\widehat G_{\geq n}$ is a finite duration game. 
Therefore, one of the players has a winning strategy in $\widehat G_{\geq n}$: playing this strategy ensures a win in $\widehat G_{\geq n}$ for this player, regardless of the strategy of the opponent.

The sequence of sets $\widehat Q_{\geq 0},\widehat Q_{\geq -1},\widehat Q_{\geq -2},\ldots$ is increasing. Thus:
\begin{itemize}
    \item If player~$1$ has a winning strategy in $\widehat G_{\geq n}$,
then she also has a winning strategy in $\widehat G_{\geq n-1}$. Indeed, after any action $a_{n-1}$, the winning strategy of player 1 in $\widehat G_{\geq n}$ makes sure in the remaining stages $n,\ldots,-1,0$ that the run ends with a reversed position in $\widehat Q_{\geq n}$. And as $\widehat Q_{\geq n}\subseteq \widehat Q_{\geq n-1}$, 
the run ends with a reversed position in $\widehat Q_{\geq n-1}$.
\item Similarly, if player~$2$ has a winning strategy in $\widehat G_{\geq n-1}$,
then she also has a winning strategy in $\widehat G_{\geq n}$. Indeed, the winning strategy of player 2 in $\widehat G_{\geq n-1}$ makes sure in stages $n-1,n,\ldots,-1,0$ that the run does not end with a reversed position in $\widehat Q_{\geq n-1}$. And as $\widehat Q_{\geq n}\subseteq \widehat Q_{\geq n-1}$, 
the run does not end with a reversed position in $\widehat Q_{\geq n}$ either.
\end{itemize}

\subsection{An auxiliary result for open winning sets}\label{sec-open}

The following lemma separates two cases for open winning sets in two-player win-lose games, and shows that an equilibrium exists in both cases. The proof of the second case is the more difficult one, and here we will use Cantor's intersection theorem as explained in Section \ref{sec-backind}. We will use this lemma to prove in Theorem \ref{theorem-rank2} that all two-player win-lose games with winning sets of Borel-rank at most 2 admit an equilibrium.

\begin{lemma}[\textbf{Implications of having a winning strategy in $\widehat G_{\geq n}$ when $W$ is open}]
\label{theorem-open}
Consider a two-player win-lose game $G$. Assume that player 1's winning set $W$ is open in the product topology of $R$, and let $\widehat Q$ be a set of reversed positions such that $W$ can be written  as in \Eqref{eq-Wunion}. 
We distinguish two cases:
\begin{itemize}
    \item[(1)] Assume that player~1 has a winning strategy in the game $\widehat G_{\geq n}$ for some $n\in\dZ_0$. Then, in the game $G$:
    \begin{itemize}
        \item[(1-i)] Each position in stage $n$ is winning for player 1.
        \item[(1-ii)] There exists an equilibrium $(s,r)$ with $r\in W$.
        \item[(1-iii)] For every equilibrium $(s,r)$ we have $r\in W$.
    \end{itemize}
    \item[(2)] Assume that player~2 has a winning strategy in the game $\widehat G_{\geq n}$ for every $n\in\dZ_0$. Then, the game $G$ admits an equilibrium $(s,r)$ with $r\notin W$.
\end{itemize}
\end{lemma}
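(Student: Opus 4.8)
The plan is to treat the two cases separately, using Lemma~\ref{lem-winpos} and Lemma~\ref{lemma-howwins} to convert statements about winning positions into statements about equilibria. The key bridge is the observation from Section~\ref{sec-rev} that having a winning strategy in the finite auxiliary game $\widehat G_{\geq n}$ is closely tied to being a winning position in the subgame $G[p_n]$: if the reversed position $(a_n,\ldots,a_0)$ ends in $\widehat Q_{\geq n}$, then the run lands in a cylinder set contained in the open set $W$, so the run is in $W$. Thus a winning strategy for player~1 in $\widehat G_{\geq n}$ forces the run into $W$, and conversely a win for player~1 in $G[p_n]$ (forcing the run into $W$) means she can force the reversed play into $\bigcup_k \widehat Q_k$, i.e., she wins $\widehat G_{\geq n}$.

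For Case~(1), I would first prove (1-i): if player~1 wins $\widehat G_{\geq n}$, then by the monotonicity noted after \Eqref{eq-whQ_n} she also wins $\widehat G_{\geq m}$ for every $m\leq n$. Fix any position $p_n\in P_n$; her winning strategy in $\widehat G_{\geq n}$ guarantees the reversed play ends in $\widehat Q_{\geq n}$, hence the induced run lies in $W$, so $p_n$ is a winning position for player~1 in the sense of Definition~\ref{def-winningpos}. For (1-ii), since every position in stage~$n$ is winning for player~1, Part~2 of Lemma~\ref{lem-winpos} lets player~1 maintain winning positions forward from stage~$n$, producing a run $r\in W$ along which player~1 is winning in every position; Part~2 of Lemma~\ref{lemma-howwins} then supplies a strategy profile $s$ with $(s,r)$ an equilibrium. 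For (1-iii), suppose $(s,r)$ is any equilibrium; if $r\notin W$ then by Part~1 of Lemma~\ref{lemma-howwins} every position along $r$ is winning for player~2, contradicting (1-i) which says the stage-$n$ position along $r$ is winning for player~1 (each position is winning for exactly one player by Part~1 of Lemma~\ref{lem-winpos}).

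Case~(2) is where I expect the real work, and this is where Cantor's intersection theorem enters as described in Section~\ref{sec-backind}. The goal is to produce a run $r\notin W$ along which player~2 is winning in every position, so that the second claim of Lemma~\ref{lemma-howwins}(2) yields an equilibrium. For each $n\in\dZ_0$, define $R_n$ to be the set of runs $r=(\ldots,a_{-1},a_0)$ such that the position $r_{<n}$ in stage~$n$ is winning for player~2 and moreover the continuation $(a_n,\ldots,a_0)$ follows a winning strategy of player~2 in the subgame $G[r_{<n}]$ (equivalently, realizes a win for player~2 in $\widehat G_{\geq n}$). The hypothesis that player~2 wins $\widehat G_{\geq n}$ for every $n$ is exactly what makes every $R_n$ nonempty, since it guarantees that every stage-$n$ position is winning for player~2. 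I would then check that each $R_n$ is closed: membership depends only on the finitely many coordinates $a_n,\ldots,a_0$ matching some winning response, so $R_n$ is a finite union of cylinder sets, hence closed (and compact, as $R$ is compact).

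The main obstacle is arranging the nesting $R_0\supseteq R_1\supseteq\cdots$, because a naive definition need not be monotone. The fix is to build the sets using the forward-consistency structure of Lemma~\ref{lem-winpos}(2): I would define $R_n$ so that a run in $R_n$ not only wins for player~2 from stage~$n$ onward but does so via a choice at stage~$n$ that keeps the next position winning for player~2, matching the requirement already imposed at stage~$n+1$. Concretely, $R_n$ should consist of runs along which, from stage~$n$ on, player~2's actions follow a fixed winning strategy and player~1's actions are arbitrary, with the winning-position property preserved at each later stage by Lemma~\ref{lem-winpos}(2); this makes the constraint defining $R_n$ weaker than that defining $R_{n+1}$, giving $R_n\supseteq R_{n+1}$. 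Once nestedness, nonemptiness, and compactness are in hand, Cantor's intersection theorem gives a run $r\in R^*=\bigcap_{n} R_n$. By construction every position $r_{<n}$ along $r$ is winning for player~2, and since each such position already forces player~2's win regardless of player~1's final action (as noted in the footnote to Lemma~\ref{lemma-howwins}), we have $r\notin W$. Finally, the second claim of Lemma~\ref{lemma-howwins}(2) supplies a strategy profile $s$ with $(s,r)$ an equilibrium and $r\notin W$, completing the proof.
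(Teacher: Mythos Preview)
Your treatment of Part~(1) is essentially correct and close to the paper's. The real problem is in Part~(2), where there is a genuine gap: you conflate ``player~2 is winning in the auxiliary game $\widehat G_{\geq n}$'' with ``the position $p_n$ is winning for player~2 in the subgame $G[p_n]$''. These are \emph{not} equivalent. Winning $\widehat G_{\geq n}$ only means player~2 can avoid the reversed positions in $\widehat Q_{\geq n}=\widehat Q_n\cup\cdots\cup\widehat Q_0$; but the run may still land in $W$ by ending with a reversed position in some $\widehat Q_m$ with $m<n$. Concretely, in Example~\ref{ex-winloseruns} (where $W$ is the set of runs in which player~2 played action~1 at least once), player~2 wins $\widehat G_{\geq n}$ for every $n$, yet any stage-$n$ position in which player~2 has already played action~1 is a winning position for player~1 in $G$. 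So your claim that the hypothesis of Part~(2) ``guarantees that every stage-$n$ position is winning for player~2'' is false, and with it the nonemptiness of your sets $R_n$ collapses. Your closedness argument is also flawed for the same reason: whether $r_{<n}$ is winning for player~2 in $G[r_{<n}]$ depends on the entire infinite past $r_{<n}$, not just on the coordinates $a_n,\ldots,a_0$.

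What is missing is precisely the mechanism the paper introduces: the function $w(p_n)=\inf\{k\le n: p_n$ is winning for player~2 in $\widehat G_{\geq k}\}$, which measures \emph{how far back} player~2's winning property persists. A position $p_n$ is winning for player~2 in $G$ if and only if $w(p_n)=-\infty$. The paper then defines $R_n$ via the constraint that player~2's actions preserve the value of $w$ (property~(iii)), which is a condition on finitely many coordinates relative to each auxiliary game, so closedness can be established via a limiting argument. The nesting and nonemptiness then fall out, and Cantor's intersection theorem produces a run along which $w\equiv -\infty$. Without some device like $w$ to control the contribution of the reversed positions $\widehat Q_m$ with $m<n$, you cannot bridge from the hypothesis on the $\widehat G_{\geq n}$'s to the existence of a run along which player~2 is winning in $G$.
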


\begin{remark}[\textbf{Dependence of the conclusion of Lemma~\ref{theorem-open} on the choice of $\widehat{Q}$}]\rm 
As mentioned earlier, generally the winning set $W$ 
can be written as in \Eqref{eq-Wunion} with multiple sets $\widehat Q$. 
Depending on which $\widehat Q$ we take, we may obtain different auxiliary games $\widehat G_{\geq n}$. So one may wonder whether the conclusions of Lemma~\ref{theorem-open} depend on the choice of $\widehat Q$. The answer to this question is negative. Due to Parts (1-iii) and (2) of Lemma~\ref{theorem-open}, either all the possible sets $\widehat Q$ lead to Part 1, or all the possible sets $\widehat Q$ lead to Part 2. $\blacklozenge$
\end{remark}

\begin{proof}\ \\
\noindent \textbf{Proof of Part 1:} As player~1 has a winning strategy in the game $\widehat G_{\geq n}$, player 1 can ensure during the stages $n,\ldots,-1,0$ of $\widehat G_{\geq n}$ that the run ends with a reversed position in $\widehat Q_{\geq n}$, which implies by Eqs.~\eqref{eq-Wunion} and \eqref{eq-whQ_n} that the run will belong to $W$. In particular, Part (1-i) holds.

Now we argue that Part (1-ii) holds. Let $s_1$ be a strategy for player 1 in the game $G$ that plays a fixed action $a^*\in A$ before stage $n$ and then ensures in stages $n,\ldots,-1,0$ that the run ends with a reversed position in $\widehat Q_{\geq n}$. Let $s_2$ be the strategy for player 2 that always plays the fixed action $a^*\in A$. Then, there is a unique run $r$ that is consistent with $s=(s_1,s_2)$, and $r\in W$ due to $s_1$. 
Moreover, for every strategy $s'_2$ for player~2
and every run $r'$ that is consistent with $(s_1,s'_2)$,
the run $r'$ must end with a reversed position in $\widehat Q_{\geq n}$.
It follows that $(s,r)$ is an equilibrium.

Finally, Part (1-iii) holds, since the run before stage $n$ does not affect the winning condition in $\widehat G_{\geq n}$,
and since in $\widehat G_{\geq n}$ player~1 has a winning strategy.
\smallskip

\noindent \textbf{Proof of Part 2:}
This is the difficult part of the proof, and we will use Cantor's intersection theorem 
(see Section \ref{sec-backind} for a general description of the idea).

As before, we use the letters $p$ and respectively $p_n$ for positions and positions in stage $n$ in the original game $G$. That means that $p_n=(\ldots,a_{n-2},a_{n-1})$ is an infinite sequence of actions until stage $n-1$. 

Recall that the game $\widehat G_{\geq k}$ is played in stages $k,\ldots,-1,0$. It has a finite set of positions: $A^{\leq -k}=A^0\cup\cdots\cup A^{-k}$, which is all finite sequences of actions of length at most $-k$, including the empty sequence. 

If $k\leq n$, then a position $p_n=(\ldots,a_{n-2},a_{n-1})$ in stage $n$ of the game $G$ naturally induces the position $(a_k,\ldots,a_{n-2},a_{n-1})$ in stage $n$ of the game $\widehat G_{\geq k}$. For simplicity, we will write that $p_n$ is a position in the game $\widehat G_{\geq k}$. 
This will allow us to evaluate how good a position $p_n$ of the game $G$ is with respect to all $\widehat G_{\geq k}$ where $k\leq n$, that is, with respect to all $\widehat Q_{\geq k}$ where $k\leq n$. This is of primary interest to the players as $W$ is determined by the sets $\widehat Q_{\geq k}$, cf.~\Eqref{eq-Wunion} and \eqref{eq-whQ_n}.

Note that two different positions $p_n$ and $p'_n$ in stage $n$ of the game $G$ can induce the same position in a game $\widehat G_{\geq k}$. This happens if $p_n$ and $p'_n$ only differ before stage $k$.\medskip

\noindent\textbf{Step 1: Defining the quantity $w(p_n)$.} For each position $p_n=(\ldots,a_{n-2},a_{n-1})$ in the game $G$, in each stage $n\in\dZ_0$, let 
\begin{equation}\label{eq-w}
w(p_n)\,=\,\inf\Big\{k\in\{\ldots,n-1,n\}\,\colon\,\text{the position $p_n$ is winning for player 2 in }\widehat G_{\geq k}\Big\}.
\end{equation}

Note that $w(p_n)\leq n$. Moreover, if $m\leq\ell\leq n$, then $\widehat Q_{\geq m}\supseteq \widehat Q_{\geq \ell}$, and hence in the position $p_n$, the game $\widehat G_{\geq m}$ is (weakly) favorable for player 1 compared to $\widehat G_{\geq \ell}$, and $\widehat G_{\geq \ell}$ is (weakly) favorable for player 2 compared to $\widehat G_{\geq m}$. Thus, intuitively, the lower $w(p_n)$ is, the more favorable
this position $p_n$ is for player 2, and the ideal situation for player 2 is when $w(p_n) = -\infty$.\medskip

\noindent\textbf{Illustration of the definition of $w(p_n)$:}  
Suppose that the action set is $A=\{0,1\}$ and the open winning set $W$ consist of all runs in which action 0 is played in stage $-2$ or in stage $-1$. 
Let $\widehat Q_{-2}=\{0\}\times A\times A$,
$\widehat Q_{-1}=\{0\}\times A$, and $\widehat Q_\ell=\emptyset$ for all $\ell\notin\{-2,-1\}$. 
So, \Eqref{eq-Wunion} is satisfied, 
$\widehat Q_{\geq -2}=\widehat Q_{-2}\cup \widehat Q_{-1}$, 
and $\widehat Q_{\geq -1}=\widehat Q_{-1}$. 

Consider a position $p_{-1}=(\ldots,a_{-4},a_{-3},0)$ in stage $-1$ of the game $G$, which ends with action 0 in stage $-2$. As discussed above, $p_{-1}$ can be seen as the position $(0)$, consisting of action 0, in stage $-1$ of the game $\widehat G_{\geq -2}$. So regardless of the actions $a_{-1}$ and $a_0$, the sequence $(0,a_{-1},a_0)$ will belong to $\widehat Q_{-2}\subseteq \widehat Q_{\geq -2}$. That is, $p_{-1}$ is winning for player 1 in the game $\widehat G_{\geq -2}$.

Also, $p_{-1}$ can be seen as the initial position in stage $-1$ of the game $G_{\geq -1}$. Note that $p_{-1}$ is winning for player 2 in the game $\widehat G_{\geq -1}$, because by playing action 1 in stage $-1$, player 2 can avoid $\widehat Q_{-1}=\widehat Q_{\geq -1}$. 

In conclusion, we have $w(p_{-1})=-1$. $\blacklozenge$\medskip

\noindent\textbf{Step 2: Properties of $w(p_n)$.} By the definitions, in any position $p_n=(\ldots,a_{n-2},a_{n-1})$ in any stage $n\in\dZ_0$:
\begin{itemize}
	\item[(i)] If $w(p_n)=m\in\dZ_0$, and thus $m\leq n$, then the position $p_n$ is winning for player 2 in the game $\widehat G_{\geq k}$ for all $k=m,\ldots,n$ and winning for player 1 in the game $\widehat G_{\geq k}$ for all $k<m$. 
	\item[(ii)] If $w(p_n)=-\infty$ then the position $p_n$ is winning for player 2 in the game $\widehat G_{\geq k}$ for all $k\leq n$. 
	\item[(iii)] Assume that $n$ is odd, so player 2 is the active player. Then, for all actions $a_n$ we have $w(p_{n},a_n)\geq w(p_n)$, so player 2's actions can keep equality at best. But there is an action $a_n\in A$ such that $w(p_{n},a_n)=w(p_n)$. Indeed, if $w(p_n)=m\in\dZ_0$ then $a_n$ can be any optimal action%
\footnote{
An action $a_n$ is \emph{optimal} for player~$i$ at position $p_n$ in the game $\widehat G_{\geq m}$
if (i) $p_n$ is a winning position for player~$i$ in $\widehat G_{\geq m}$,
(ii) $p_n$ is controlled by player~$i$, and
(iii) $a_n$ is part of a winning strategy of player~$i$ when the starting position is $p_n$.
}
for player 2 in the game $\widehat G_{\geq m}$. And if $w(p_n)=-\infty$, then by the finiteness of the action set $A$, there is an action $a_n$ such that $a_n$ is optimal in the game $\widehat G_{\geq m}$ for infinitely many $k\leq n$, ensuring that $w(p_{n},a_n)=w(p_n)=-\infty$.
 \item[(iv)] Assume that $n$ is even, so player 1 is the active player. 
 If $n<0$, then for every action $a_n\in A$ we have $w(p_{n},a_n)\leq w(p_n)$. 
 And if $n=0$, then for every action $a_0\in A$ the induced run 
 $(p_0,a_0)$ 
 does not end with a reversed position in $\widehat Q_{m}$ for any $m \in \color{black}\dZ_0$ with $m\geq w(p_0)$.
 \item[(v)] If $w(p_n)=-\infty$, then the position $p_n$ is winning for player 2 in the original game $G$.\smallskip\\ 
 Indeed, when starting in the position $p_n$, by (iii) and (iv), player 2 can make sure that the value of $w$ remains $-\infty$ in all stages $n,\ldots,-1,0$. In particular, for the position $p_0=(p_n,a_n,\ldots,a_{-1})$ in stage 0 we have $w(p_0)=-\infty$. And then by (iv) again, the induced run $r=(p_n,a_n,\ldots,a_{-1},a_0)$ does not end with a reversed position in $\widehat Q_{m}$ for any $m \in \color{black}\dZ_0$. That is,  $r$ does not end with a reversed position in $\widehat Q$. Hence, by \Eqref{eq-Wunion} we obtain $r\notin W$, as desired.
\end{itemize} 

\noindent\textbf{Step 3: Finding the equilibrium.} For each odd stage $n\in\dZ_0$, let $R_n$ denote the set of runs $r\in R$ such that, in each odd stage $m\in\{n,\ldots,-3,-1\}$, player 2's action satisfies (iii): $w(r_{<m+1})=w(r_{<m})$. The sets $R_n$ have the following properties:
\begin{itemize}
    \item The sequence $R_n$ is nonincreasing: $R_0\supseteq R_{-1}\supseteq R_{-2}\supseteq \cdots$\smallskip\\
    Indeed, this follows from the definition of $R_n$.
    \item  Each $R_n$ is nonempty.\smallskip\\
    Indeed, consider an arbitrary position $p_n$ is stage $n$. We can extend $p_n$ by always adding an action as in (iii) in odd stages and an arbitrary action in even stages. The run obtained this way belongs to $R_n$.
    \item  Each $R_n$ is compact.
    \smallskip\\
    Since $R_n\subseteq R$ and $R$ is compact and metrizable, we only need to show that $R_n$ is sequentially closed. Let $(r_k)_{k=1}^\infty$ be a sequence of runs in $R_n$ converging to $r$. We need to prove that $r\in R_{n}$.
    \smallskip\\
    \emph{Step 1:} Consider a stage $m\in\{n,\ldots,-1,0\}$. 
    We show that $\lim_{k\to\infty}w((r_k)_{<m}) = w(r_{<m})$. 
    
   Let $\ell\leq m$ and consider the game $\widehat G_{\geq\ell}$. Recall that the game $\widehat G_{\geq \ell}$ is  played in stages $\ell,\ldots,-1,0$ and its winning condition $\widehat Q_{\geq\ell}$ is independent of the actions chosen before stage $\ell$. Since $r_k\to r$ as $k\to\infty$, for large $k\in\N$ the actions in stages $\ell,\ldots,m$ along $r_k$ coincide with those along $r$. Hence, for large $k\in\N$, the position $(r_k)_{<m}$ is winning for player 2 in the game $\widehat G_{\geq\ell}$ if and only if the position $r_{<m}$ is winning for player 2 in the game $\widehat G_{\geq\ell}$. By \Eqref{eq-w}, it follows that $\lim_{k\to\infty}w((r_k)_{<m}) = w(r_{<m})$. 
    \smallskip\\
   \emph{Step 2:} We show that $r\in R_n$. 

   Consider an odd stage $m\in\{n,\ldots,-3,-1\}$. We need to show that $w(r_{<m+1})=w(r_{<m})$. 

   As $r_k\in R_n$ for each $k\in\N$, we have $w((r_k)_{<m+1})=w((r_k)_{<m})$. Hence, by Step 1, we obtain $w(r_{<m+1})=w(r_{<m})$, as desired.
    \end{itemize}
It follows from Cantor's intersection theorem that the intersection $R_0\cap R_{-1}\cap R_{-2}\cap \cdots $ is nonempty. Let the run $r=(\ldots,a_{-1},a_0)$ be an element of this intersection. Along $r$, by (iii) and (iv), the value of $w$ is nonincreasing, and the value of $w$ is at most $n$ in any stage $n\in\dZ_0$. Hence, along $r$ the value of $w$ is always $-\infty$. By (v), in the game $G$, player 2 is winning in each position along $r$. Hence, by Part 2 of Lemma \ref{lemma-howwins}, there is a strategy profile~$s$ such that $(s,r)$ is an equilibrium, and $r\notin W$. Therefore, the proof is complete.
\end{proof}

\begin{remark}[\textbf{Alternative definition of $(R_n)_{n \in \dZ_0}$ in Step~3 of the proof of Lemma~\ref{theorem-open}}]\rm 
In Step~3 of the proof of Lemma~\ref{theorem-open}, 
to identify an equilibrium run,
we defined a nested sequence $R_0\supseteq R_{-1}\supseteq R_{-2}\cdots$ of sets of runs. One advantage of the definitions of these sets is that $R_{n-1}$ eliminates runs from $R_n$ purely based on the action in stage $n-1$. There are alternative possible definitions to these sets.
For example, each $R_n$ could be the set of runs $r$ such that each position $r_{<n},\ldots,r_{<-1},r_{<0}$ is winning for player 2 in the game $G_{\geq n}$. This would lead to a somewhat easier definition (as the function $w$ is not involved), but with this definition $R_{n-1}$ might eliminate runs from $R_n$ based on the actions in stages $n\ldots,-1,0$, which is arguably less constructive. $\blacklozenge$
\end{remark}

\begin{remark}[\textbf{On the implication of the second condition in Lemma \ref{theorem-open}}]\label{rem-bothposs}\rm 
In Example~\ref{ex-winloseruns},
$A=\{0,1\}$ and $W$ is the set of all runs in which player 2 played action~1 at least once. 
The set $W$ is open, and this game belongs to Part~2 in Lemma \ref{theorem-open}.\footnote{Indeed, for instance, let $\widehat Q$ be the set of all reversed positions that start in an odd stage with action 1 (recall that player 2 is active in odd stages). Then, for each odd $n\in\dZ_0$ we have $\widehat Q_n=
\{1\}\times A^{-n}$,
and for each even $n\in\dZ_0$ we have $\widehat Q_n=\emptyset$. 
Hence, player 2 wins trivially in the game $\widehat G_{\geq 0}$ (as $\widehat Q_{\geq 0}$ is empty) and player 2 can win in the game $\widehat G_{\geq n}$ for each $n<0$ by playing action 1 at any odd stage, say at stage $-1$.}
Moreover,
the equilibrium $(s^0,r^0)$
satisfies $r^0 \not\in W$.
However, as we have seen,
this game admits an additional equilibrium, 
$(s^1,r^1)$,
such that $r^1 \in W$.
$\blacklozenge$
\end{remark}

\subsection{Existence of equilibrium for winning sets of Borel-rank at most 2}\label{sec-rank2}

Recall that, in the product topology of the set $R$ of runs, a Borel set has rank at most 2 (so rank 1 or rank 2) exactly when it can be written as a countable union of closed sets or as a countable intersection of open sets. 
The next theorem answers the question of existence of equilibrium in the affirmative for two-player win-lose games,
provided that the winning set has Borel-rank at most 2. In particular, this is the case if the winning set is open or closed.
In Section~\ref{sec-rank3}, we show that an equilibrium need not exist when the winning set has Borel-rank $3$.

We will use auxiliary games where the winning set of player 1 is not $W$ but some other set $U\subseteq R$. To avoid confusion, we will denote this game by $G(U)$.

\begin{theorem}[\textbf{Existence of equilibrium for winning sets of Borel-rank at most 2}]\label{theorem-rank2} Consider a two-player win-lose game $G$. Assume that player 1's winning set $W$ has Borel-rank at most 2 in the product topology. Then the game $G$ admits an equilibrium, 
and hence, also a strong equilibrium.

More precisely, when $W$ is a countable intersection of open sets\footnote{A similar statement can be formulated when $W$ is countable union of closed sets, because in that case the winning set $R\setminus W$ of player~2 is a countable intersection of open sets.}, we have two cases: 
\begin{enumerate}
    \item[(1)] Assume that there exists an open set $O\supseteq W$ of runs such that the game $G(O)$ admits an equilibrium $(s,r)$ with a winning run for player 2, i.e., $r\notin O$. Then, $(s,r)$ is an equilibrium in the game $G$ and $r\notin W$.
    \item[(2)] Assume that for every open set $O\supseteq W$ of runs, 
    all equilibria $(s,r)$ of
    the game $G(O)$ satisfy 
    $r\in O$. Then, the game $G$ admits an equilibrium $(s',r')$ with a winning run for player 1, i.e., $r'\in W$.
\end{enumerate}
\end{theorem}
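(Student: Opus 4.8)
The plan is to first reduce the general rank-$\leq 2$ case to the case where $W$ is a $G_\delta$ set, and then establish the two-case dichotomy stated in the theorem. Since a set of Borel-rank at most $2$ is either $\Sigma_2$ (an $F_\sigma$-set) or $\Pi_2$ (a $G_\delta$-set), and since the two players play symmetric roles in a win-lose game (both move alternately and are active infinitely often), if $W$ is $F_\sigma$ I would simply run the argument for player~2's winning set $R\setminus W$, which is then $G_\delta$. So assume $W=\bigcap_{m\in\N}O_m$ with each $O_m$ open; replacing $O_m$ by $O_1\cap\cdots\cap O_m$, I may take the sequence $O_m$ to be nonincreasing with $O_m\supseteq W$ for all $m$. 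The dichotomy in the theorem---whether or not some open $O\supseteq W$ admits an equilibrium of $G(O)$ that is a win for player~2---is visibly exhaustive, so it remains to treat the two cases.

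For Case~1, suppose $O\supseteq W$ is open and $(s,r)$ is an equilibrium of $G(O)$ with $r\notin O$. Since $W\subseteq O$ we immediately get $r\notin W$, so the outcome is a win for player~2 in $G$ as well. The key point is that the \emph{same} pair $(s,r)$ is an equilibrium of $G$. Indeed, at every position $r_{<n}$ along $r$, the profile $s[r_{<n}]$ is a traditional equilibrium of the finite subgame $G(O)[r_{<n}]$ in which player~2 wins; hence, by the best-response property, against $s_2[r_{<n}]$ player~1 cannot force the outcome into $O$. As $W\subseteq O$, player~1 can then certainly not force it into $W$, so player~1's payoff stays $0$ and $s[r_{<n}]$ remains a traditional equilibrium of $G[r_{<n}]$. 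Since $r$ is consistent with $s$, this shows that $(s,r)$ is an equilibrium of $G$ with $r\notin W$, as claimed.

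For Case~2, assume that for every open $O\supseteq W$ every equilibrium of $G(O)$ satisfies $r\in O$. In particular each open game $G(O_m)$ has no equilibrium that is a win for player~2, so by Part~2 of Lemma~\ref{theorem-open} it cannot be in that case; hence it falls under Part~1, and player~1 has a winning strategy in the auxiliary game $\widehat G^{(m)}_{\geq n}$ for some stage, and therefore---by the monotonicity of winning strategies in the games $\widehat G_{\geq n}$ noted before Lemma~\ref{theorem-open}---for all sufficiently negative $n$. Combined with Part~(1-i), every sufficiently early position is winning for player~1 in $G(O_m)$, i.e.\ player~1 can force the run into $O_m$. I would then build, in the spirit of the proof of Part~2 of Lemma~\ref{theorem-open}, a nonincreasing sequence $R_0\supseteq R_{-1}\supseteq\cdots$ of nonempty closed sets of runs that remain ``on track'' for player~1 with respect to the finitely many targets $O_m$ relevant at each stage, apply Cantor's intersection theorem to obtain a run $r^*\in\bigcap_n R_n$, and verify that $r^*\in W$ and that every position along $r^*$ is winning for player~1 in $G$. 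By Part~2 of Lemma~\ref{lemma-howwins}, such a run is supported by an equilibrium $(s',r')$ with $r'=r^*\in W$, completing Case~2. Finally, the existence of a strong equilibrium follows from the existence of an equilibrium together with Theorem~\ref{theorem-runsequal}.

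The main obstacle is the compactness argument in Case~2. Unlike in Lemma~\ref{theorem-open}, the target $W=\bigcap_m O_m$ is only an intersection of open sets, so ``winning for player~1 in $G$'' at a position depends on the entire infinite past and is not by itself a closed condition. The device that makes the sets $R_n$ closed is to phrase the ``on track'' condition through the auxiliary games $\widehat G^{(m)}_{\geq k}$, whose winning conditions ignore all actions before stage $k$, exactly as the function $w$ was used in Lemma~\ref{theorem-open}. The combinatorial fact that lets one pass to the intersection is that, in each finite subgame, player~1 can force the outcome into $W$ if and only if she can force it into $O_m$ for every $m$: since the subgame has only finitely many player-1 strategies, a single strategy must secure $O_m$ for arbitrarily large $m$ and, the $O_m$ being nested, therefore secures $\bigcap_m O_m=W$. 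Arranging the nested sets to be simultaneously nonempty and closed while forcing the limiting run into every $O_m$ and preserving player~1's winning status at all positions is the delicate heart of the proof.
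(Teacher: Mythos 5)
Your proposal is correct and follows essentially the same route as the paper: Case~1 is the paper's argument verbatim (the same pair $(s,r)$ works because any player-1 deviation stays outside $O\supseteq W$), and Case~2 reproduces the paper's construction --- invoke Part~1 of Lemma~\ref{theorem-open} for each $O_m$ to get stages from which every position is winning for player~1 in $G(O_m)$, track this with a monotone ``level'' function along positions, extract a run via Cantor's intersection theorem, and use the finiteness-of-the-action-set pigeonhole (the paper's properties (iv-1) and (vi)) to upgrade ``wins every $O_m$'' to ``wins $W$'', finishing with Part~2 of Lemma~\ref{lemma-howwins}. Your Case~2 stays at the level of a plan rather than writing out the sets $R_n$ explicitly, but you correctly identify both delicate points (closedness of the $R_n$ and the passage to the intersection $\bigcap_m O_m$), which is exactly where the paper's proof also concentrates its effort.
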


\begin{proof} 
We prove the statement when $W$ is a countable intersection of open sets.\medskip

\noindent\textbf{Proof of Part 1:} Suppose that an open set $O$ and an equilibrium $(s,r)$ in the game $G(O)$ exist as in Part 1. This means that the game $G(O)$ belongs to Part 2 of Lemma~\ref{theorem-open}.

Since $r\notin O$ and $O\supseteq W$, we have $r\notin W$. Thus, it remains to verify that $(s,r)$ is an equilibrium in the game $G$ (with winning set $W$ for player 1 and winning set $R\setminus W$ for player 2). As $r\notin W$, player~2 has no incentive to deviate in any position along $r$. Now suppose that player 1 deviates from $r$ in one of the positions along $r$, which leads to the run $r'$. Because $(s,r)$ is an equilibrium in $G(O)$ with $r\notin O$, we must also have $r'\notin O$, and hence $r'\notin W$. This means that player~1's deviation is not profitable in the game $G$. 
Therefore, $(s,r)$ is an equilibrium in the game $G$, as claimed.\medskip

\noindent\textbf{Proof of Part 2:} As $W$ is a countable intersection of open sets, we can write $W=\bigcap_{k=1}^\infty O_k$ where each $O_k\subseteq R$ is open. We can assume w.l.o.g.~that the sequence $(O_k)_{k=1}^\infty$ is non-increasing: $O_1\supseteq O_2\supseteq O_3\supseteq \cdots $; indeed, otherwise replace each $O_k$ with $O_1\cap\cdots\cap O_k$.

For each $k\in\dN$, there is a stage $n_k\in\dZ_0$ such that in the game $G(O_k)$ each position in stage $n_k$ is winning for player 1. Indeed, this follows as $O_k$ is an open set, and by the assumption of Part 2, the game $G(O_k)$ belongs to Part 1 of Lemma \ref{theorem-open} (regardless of the set $\widehat Q$ and the resulting auxiliary games $\widehat G_{\geq n}$). 

For each stage $n\in\dZ_0$ and position $p_n=(\ldots,a_{n-2},a_{n-1})$, let\footnote{Note that, in this proof, $w(p_n)$ is a natural number or $+\infty$, related to the indices of the sequence $O_1,O_2,\ldots$ of open sets, whereas in the proof of Theorem \ref{theorem-open}, $w(p_n)$ was a stage (a nonnegative integer) or $-\infty$ related to winning stages for a given open set.}
\[w(p_n)\,=\,\sup\Big\{k\in\dN\,\colon\,\text{the position $p_n$ is winning for player 1 in the game }G(O_k)\Big\},\]
with the convention that $w(p_n)=0$ if $p_n$ is not winning for player 1 in the game $G(O_k)$ for any $k\in\dN$.\footnote{This is equivalent to the condition that $p_n$ is not winning for player 1 in $G(O_1)$, as $O_1\supseteq O_k$ for all $k\in\N$.} Hence:
\begin{itemize}
	\item[(i)] If $w(p_n)=m\in\dN$, then $p_n$ is winning for player 1 in the game $G(O_k)$ for all $k=1,\ldots,m$ and winning for player 2 in the game $G(O_k)$ for all $k>m$. 
    This statement holds because 
    the sequence $(O_k)_{k=1}^\infty$ is non-increasing. 
	\item[(ii)] If $w(p_n)=\infty$, then $p_n$ is winning for player 1 in the game $G(O_k)$ for all $k\in\dN$.
	\item[(iii)] If $n\leq n_k$, then $w(p_n)\geq k$. Indeed, the position $p_n$ is winning for player 1 in the game $G(O_k)$: player 1 can play arbitrarily from stage $n$ until stage $n_k$, and then by the definition of $n_k$, she can guarantee a win.
	\item[(iv-1)] Assume that $n<0$ is even, so player 1 is the active player. Then there is an action $a_n\in A$ such that $w(p_{n},a_n)=w(p_n)$.\smallskip\\
    Indeed, if $w(p_n)=m\in\dN$, then $a_n$ can be any action for player 1 that is optimal in the game $G(O_m)$. And if $w(p_n)=\infty$, then by the finiteness of the action set $A$, there is an action $a_n$ such that $a_n$ is optimal in the game $G(O_k)$ for infinitely many $k\in\dN$, ensuring that $w(p_n,a_n)=w(p_n)=\infty$.
   \item[(iv-2)] Now assume that $n=0$, so player 1 is the active player. Then, similarly, there is an action $a_0\in A$ such that the induced run $r=(p_0,a_0)$ belongs to $O_k$ for all $k\in\N$ with $k\leq w(p_1)$.
 \item[(v)] Assume that $n$ is odd, so player 2 is the active player. Then for every action $a_n\in A$ we have $w(p_{n},a_n)\geq w(p_n)$.
 \item[(vi)] If $w(p_n)=\infty$, then $p_n$ is winning for player 1 in the original game $G$.\smallskip\\
  Indeed, when starting in the position $p_n$, by (iv-1) and (v), player 1 can make sure that the value of $w$ remains $\infty$ in all stages $n,\ldots,-1,0$. In particular, for the position $p_0=(p_n,a_n,\ldots,a_{-1})$ in stage 0 we have $w(p_0)=\infty$. And then by (iv-2), player 1 has an action $a_0$ such that the induced run $r=(p_n,a_n,\ldots,a_{-1},a_0)$ belongs to $O_k$ for all $k\in\N$. That is, $r\in W$, as desired.
\end{itemize} 

To complete the proof, we adapt the arguments from the proof of Lemma \ref{theorem-open}.
For each even stage $n\in\dZ_0$, let $R_n$ denote the set of runs $r\in R$ such that
$w(r_{<m+1})=w(r_{<m})$ for each even stage $m\in\{n,\ldots,-2,0\}$. The set $R_n$ is nonempty by (iv-1) and compact. Moreover, the sequence $R_{0},R_{-2},R_{-4},\ldots$ is nonincreasing. Hence, by Cantor's intersection theorem, there exists a run $r \in \bigcap_{n \in \dZ_0, \text{even}} R_{n}$.
By construction and (v), along $r$ the value of $w$ is nondecreasing, and by (iii), $w$ takes on arbitrarily high values. Hence, along $r$ the value of $w$ is always $\infty$. Consequently, by (vi), each position along $r$ is winning for player 1 in the game $G$. Note that by (iv-2), we can assume w.l.o.g.~that player 1's action $a_0$ in stage 0 is such that $r\in O_k$ for all $k\in\N$, that is, $r\in W$. By Part 2 of Lemma \ref{lemma-howwins}, the proof is complete.
\end{proof}

\subsection{Non-existence of equilibrium for winning sets of Borel-rank 3}\label{sec-rank3}

In this section, we show that the existence result established in Theorem \ref{theorem-rank2} cannot be extended to winning sets of Borel-rank 3, not even to the class $\Delta_3$ of Borel sets.

The following example exhibits a two-player win-lose game $G$ 
where the players have only two actions, player 1's winning set $W$ belongs to $\Delta_3$, and yet, $G$ admits no equilibrium, and hence, no strong equilibrium either.

\begin{example}[\textbf{A game with a winning set $W \in \Delta_3$ and no equilibrium}]
\label{ex-counter}\rm 
First we define the game, then we show that it admits no equilibrium, and finally we prove that player 1's winning set $W$ belongs to $\Delta_3$.\medskip

\noindent\textsc{\underline{The game.}} Consider the following two-player win-lose game $G$. The set of players is $I=\{1,2\}$, and the action set is $A=\{0,1\}$. To complete the definition of the game $G$, it remains to define the winning set $W$ for player~1. 

A run $r=(\ldots,a_{-2},a_{-1},a_0)\in R$ is called an \emph{almost single-action run} for player~1 if player~1 played one of the actions only finitely many times 
(possibly $0$)
along $(\ldots,a_{-4},a_{-2},a_0)$. Let $R^{f1}$ denote the set of all runs $r\in R$ that are almost single-action for player~1. 
If $r\in R^{f1}$, the action that player~1 played only finitely many times along $r$ is called the \emph{finite-action} of player~1 along $r$. 
Since $|A|=2$,
the finite-action of player~1 is unique.

For each run $r=(\ldots,a_{-2},a_{-1},a_0)\in R$ and each action $a \in A=\{0,1\}$, we denote by $m_1(r,a)$ the first even stage from which onward player~1 only played the action $a$:
\[ m_1(r,a) \,=\, \min\Big\{ k \in \{\ldots,-4,-2,0\} \,\colon\, a_{k}=a_{k+2}=\cdots=a_0= a\Big\},\]
with the convention that $m_1(r,a) = \infty$ if $a_0 \neq a$  
and $m_1(r,a)\, =\, -\infty$ if player~1 always selected the action $a$ along $r$.

For player~2, almost single-action runs, the set $R^{f2}$, the finite-action, and the quantity $m_2(r,a)$ are defined 
analogously.

We turn to define the winning set $W$ for player~1. Let $W \subset R$ be the set that consists of the following runs:
\begin{itemize}
\setlength\itemsep{0cm}
\item[(i)]
All runs $r$ such that $r\in R^{f1}$, $r\in R^{f2}$, and 
$m_1(r,a) < m_2(r,a')$, where $a$ is the finite-action of player~1 and $a'$ is the finite-action of player~2.
\item[(ii)] 
All runs $r$ such that $r\not\in R^{f1}$ and $r\notin R^{f2}$ in which $m_1(r,1) < m_2(r,1)$. 
\item[(iii)]
All runs $r$ such that $r\in R^{f1}$, $r\not\in R^{f2}$, and $m_1(r,a) < m_2(r,a)$, where $a$ is the finite-action of player~1.
\item[(iv)]
All runs $r$ such that $r\not\in R^{f1}$, $r\in R^{f2}$, and $m_1(r,a) < m_2(r,a)$, where $a$ is the finite-action of player~2.
\end{itemize}\vspace{0.1cm}

Note that each run can satisfy at most one of the items (i)--(iv).
\bigskip

\noindent\textsc{\underline{The game admits no equilibrium.}} We show that, for every run $r\in R$, there exists a position $p_{k_1}$ along $r$ that is winning for player~1 and there exists a position $p_{k_2}$ along $r$ that is winning for player~2. This implies by
Part~1 in 
Lemma \ref{lemma-howwins} that the game $G$ has no equilibrium. 

We will prove this claim for player~1; the proof for player~2 is analogous. Let $r=(\ldots,a_{-1},a_0)\in R$ be a run. We distinguish a number of cases. 
\begin{itemize}
\setlength\itemsep{0cm}
\item Assume that $r\in R^{f1}$ and $r\in R^{f2}$. Let $a$ denote the finite-action of player~1 and $a'$ denote the finite-action of player~2. Then $-\infty<m_1(r,a)$ and $-\infty<m_2(r,a')$. Hence, along $r$, player~1 has a winning position $p_n=(\ldots,a_{n-2},a_{n-1})$ in the even stage $n=m_2(r,a')-1$: indeed, in the odd stage $m_2(r,a')-2$ player~2 did not play the action $a'$, so if player~1 plays her finite-action $a$ in stages $n,n+2,\ldots,0$ then she wins due to (i). 

\item Assume that $r\not\in R^{f1}$ and $r\not\in R^{f2}$. Then $-\infty<m_1(r,1)$ and $-\infty<m_2(r,1)$. Hence, along $r$, player~1 has a winning position $p_n=(\ldots,a_{n-2},a_{n-1})$ in the even stage $n=m_2(r,1)-1$: indeed, in the odd stage $m_2(r,1)-2$ player~2 did not play the action 1, so if player~1 plays the action 1 in stages $n,n+2,\ldots,0$ then she wins due to (ii).

\item Assume that $r\in R^{f1}$ and $r\not\in R^{f2}$. Let $a$ denote the finite-action of player~1. Then $-\infty<m_1(r,a)$ and $-\infty<m_2(r,a)$. Hence, along $r$, player~1 has a winning position $p_n=(\ldots,a_{n-2},a_{n-1})$ in the even stage $n=m_2(r,a)-1$: indeed, in the odd stage $m_2(r,1)-2$ player~2 did not play the action $a$, so if player~1 plays her finite-action $a$ in stages $n,n+2,\ldots,0$ then she wins due to (iii). 

\item Assume that $r\not\in R^{f1}$ and $r\in R^{f2}$. The proof is similar to that of the previous case.
\end{itemize}
Hence, the game admits no equilibrium.\medskip

\noindent\textsc{\underline{The winning set $W$ belongs to $\Delta_3$.}} We define a few sets of runs.
\begin{itemize}
	\item Let $R^k[i,a]=\{r\in R\colon m_i(r,a)=k\}$, for each action $a\in A$ and either for (i) player $i=1$ and even stage $k\in\dZ_0$ or $k=\infty$, or for (ii) player $i=2$ and odd stage $k\in\dZ_0$ or $k=\infty$. Each set $R^k[i,a]$ is open (and also closed). Indeed, for example, $R^{-2}[1,0]$ is determined by the actions $a_{-4}=1$, $a_{-2}=0$, $a_0=0$. The set \[\Big\{r\in R\colon m_1(r,a)=k_1,\ m_2(r,a')=k_2,\ k_1<k_2,\ k_1\in\dZ_0,\ k_2\in\dZ_0\cup\{\infty\}\Big\}\]
is a (countable) union of sets $R^k[i,a]$, and hence open.
\item 
For each player $i=1,2$, each action $a\in A$, and each $k\in\dN$,
let $R^{\leq k}[i,a]$ denote the set of runs $r\in R$,  where along $r$ player $i$ plays actions action $a$ at most $k$ times. Each set $R^{\leq k}[i,a]$ is closed. 
Hence, for the set $R^{fi}$ of almost-single action runs for player $i$, we have $R^{fi}=\bigcup_{a\in A}\bigcup_{k\in\dN}R^{\leq k}[i,a]$. 
Thus, $R^{fi}$ is a countable union of closed sets: a $\Sigma_2$-set. 
Therefore, 
$R\setminus R^{fi}$ is a countable intersection of open sets: a $\Pi_2$-set.
\end{itemize}
It follows that the set of runs under each case in (i)-(iv) in Example \ref{ex-counter} is a finite intersection of $\Sigma_2$-sets and $\Pi_2$-sets, and hence a $\Delta_3$-set. 
Since $W$ is the union of the sets of the cases (i)-(iv), 
$W$ is a $\Delta_3$-set. $\blacklozenge$
\end{example}

\subsection{Equilibrium and value in segments}\label{sec-segwinlose}

In this section, we discuss the implications of our results for a given segment in a two-player win-lose game. As we will see, in a given segment, we can define the value and winning strategies in the usual way. Recall 
Definition \ref{def-gamerestr}
of the game $G_\Omega$ and the set of strategies $S_i(\Omega)$.

\begin{lemma}[\textbf{Same winning player in a segment}]
\label{lem-same winning}
Consider a two-player win-lose game $G$ and a segment $\Omega$. Assume that $(s,r)$ and $(s',r')$ are two equilibria such that $r,r'\in\Omega$; in other words, they are equilibria in the game $G_\Omega$. Then, $r$ and $r'$ are winning for the same player.
\end{lemma}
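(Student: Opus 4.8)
The plan is to argue by contradiction, combining the structural consequence of being an equilibrium (Lemma \ref{lemma-howwins}) with the defining feature of a segment, namely that two runs in the same segment share a common position.

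First I would suppose, toward a contradiction, that $r$ and $r'$ are winning for different players. Since the game is win-lose, one of the two runs lies in $W$ and the other in $R\setminus W$, so after relabeling we may assume without loss of generality that $r\in W$ (player 1 wins along $r$) and $r'\notin W$ (player 2 wins along $r'$).

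Next I would apply Part 1 of Lemma \ref{lemma-howwins} to each equilibrium separately. Since $(s,r)$ is an equilibrium with $r\in W$, every position along $r$ -- that is, $r_{<n}$ for all $n\in\dZ_0$ -- is winning for player 1. Likewise, since $(s',r')$ is an equilibrium with $r'\notin W$, every position along $r'$ is winning for player 2. I would then exploit that $r$ and $r'$ lie in the same segment $\Omega$: by Definition \ref{def-seg}, $r\sim r'$, so there is a stage $n\in\dZ_0$ with $r_{<n}=r'_{<n}$. Denote this common position by $p_n$. On one hand, $p_n$ is a position along $r$, hence winning for player 1; on the other hand, $p_n$ is a position along $r'$, hence winning for player 2. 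This contradicts Part 1 of Lemma \ref{lem-winpos}, which states that every position is winning for exactly one of the players. Therefore $r$ and $r'$ are winning for the same player.

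I do not anticipate a substantial obstacle, since the argument is short. The only point requiring care is the distinction between runs and positions: the relation $\sim$ does not give an identical run but an identical \emph{position} $r_{<n}=r'_{<n}$, and it is exactly this shared position that the two equilibria are forced to declare winning for opposite players, which is impossible. A secondary subtlety worth stating explicitly is the ``without loss of generality'' reduction, which is legitimate precisely because in a win-lose game each run is winning for exactly one player.
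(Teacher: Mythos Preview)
Your proof is correct and follows essentially the same approach as the paper: both arguments use that $r$ and $r'$ share a common position $r_{<n}=r'_{<n}$, which by Lemma~\ref{lem-winpos} is winning for exactly one player, and then Lemma~\ref{lemma-howwins} forces this to match the winner of each equilibrium run. The paper compresses this into two sentences without the contradiction framing, but the underlying idea is identical.
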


\begin{proof}
If $r$ and $r'$ belong to the same segment, then they have a common position $r_{<n}=r'_{<n}$ in some stage $n\in\dZ_0$, which is either winning for player 1 or for player 2. The statement follows.
\end{proof}

\begin{definition}[\textbf{Winning strategies in a segment and determined segment}]\rm 
Consider a two-player win-lose game $G$ and a segment $\Omega$. A strategy $s_1\in S_1(\Omega)$ of player 1 is \emph{winning in $G_\Omega$} if for any strategy $s_2\in S_2(\Omega)$, the (unique) run in $\Omega$ that is consistent with $(s_1,s_2)$ belongs to $W$. Winning strategies for player 2 are defined similarly. The game \emph{$G_\Omega$ is determined} if one of the players has a winning strategy in $G_\Omega$.
\end{definition}

By Lemma~\ref{lem-same winning},
if $G_\Omega$ is determined, then the value of $G_\Omega$ exists, and it is equal to 1 or 0. 
The next result follows directly.

\begin{lemma}[\textbf{Equilibrium implies determinacy}]
Consider a two-player win-lose game $G$ and a segment $\Omega$. Assume that $(s,r)$ is an equilibrium in $G_\Omega$. If $r\in W$ then $s_1$ is winning in $G_\Omega$ for player 1, and if $r\in \Omega\setminus W$ then $s_2$ is winning in $G_\Omega$ for player 2. Hence, $G_\Omega$ is determined.
\end{lemma}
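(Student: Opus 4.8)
The plan is to reduce the definition of a winning strategy in $G_\Omega$ directly to the no-profitable-deviation inequality that the equilibrium $(s,r)$ already satisfies. I would prove the case $r\in W$ in detail; the case $r\in\Omega\setminus W$ is symmetric (swap the roles of the two players), and determinacy of $G_\Omega$ then follows at once, since every run lies in $W$ or in $\Omega\setminus W$, so one of the two conclusions must hold.

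First I would record precisely what the hypothesis ``$(s,r)$ is an equilibrium in $G_\Omega$'' provides. By the equivalence stated just after Definition~\ref{def-weakeq}, this means that $s=(s_1,s_2)$ is a traditional equilibrium in the restricted game $G_\Omega$ in the sense of Definition~\ref{def-gamerestr}, and that $r$ is the unique run in $\Omega$ consistent with $s$. In particular $s_1\in S_1(\Omega)$ and $s_2\in S_2(\Omega)$, and the equilibrium inequality for player~2 reads $u_{2,\Omega}(s_1,s'_2)\le u_{2,\Omega}(s)$ for every $s'_2\in S_2(\Omega)$.

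Next I would check the definition of ``$s_1$ winning in $G_\Omega$'' against an arbitrary deviation of player~2. Fix any $s'_2\in S_2(\Omega)$. Since both $s_1$ and $s'_2$ permit $\Omega$, Part~2 of Lemma~\ref{lem-strpercons} guarantees that $(s_1,s'_2)$ admits a unique consistent run $r'\in\Omega$, and by \Eqref{eq-payoffomega} we have $u_{2,\Omega}(s_1,s'_2)=u_2(r')$. Because $r\in W$, player~2 loses at $r$, so $u_{2,\Omega}(s)=u_2(r)=0$. The equilibrium inequality then forces $u_2(r')\le 0$, and since in a win-lose game payoffs take values in $\{0,1\}$, this upgrades to $u_2(r')=0$, i.e.\ $r'\in W$. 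As $s'_2$ was arbitrary, every run in $\Omega$ arising when player~2 plays against $s_1$ lies in $W$, which is exactly the assertion that $s_1$ is winning in $G_\Omega$.

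I do not expect a genuine obstacle here; the only care required is in the translation layer. One must remember that ``equilibrium in $G_\Omega$'' is the restricted-game condition, so that the relevant inequality quantifies over all of player~2's strategies in $S_2(\Omega)$ rather than merely over finite-stage deviations, and that it is the binary payoff range $\{0,1\}$ that turns the weak inequality $u_2(r')\le 0$ into the equality $u_2(r')=0$. Equivalently, one could argue by contradiction through Lemma~\ref{lem-same winning}: were some $s'_2$ to yield $r'\notin W$, player~2 would strictly profit, contradicting that $s$ is an equilibrium in $G_\Omega$.
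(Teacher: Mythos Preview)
Your argument is correct and is exactly the direct unpacking the paper intends when it writes that the result ``follows directly'': you use the traditional-equilibrium inequality for player~2 in $G_\Omega$ together with the $\{0,1\}$ payoff range to force every run against $s_1$ into $W$. One minor remark: the closing reference to Lemma~\ref{lem-same winning} is not quite apt, since that lemma compares two equilibria rather than an equilibrium and a unilateral deviation; your contradiction argument stands on its own without invoking it.
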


The next statement follows from Theorem \ref{theorem-rank2}.

\begin{theorem}[\textbf{Existence of a determined segment}]\label{thm-detsegrank2} Consider a two-player win-lose game~$G$. If $W$ has Borel-rank at most 2, then there is a segment $\Omega$ for which the game $G_\Omega$ is determined.\footnote{Recall 
that different determined segments can be winning for different players,
see Remark \ref{rem-bothposs}.}
\end{theorem}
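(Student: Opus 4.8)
The plan is to obtain the desired segment directly from an equilibrium produced by Theorem~\ref{theorem-rank2}, so that the whole argument is a short combination of two already-established results. First I would invoke Theorem~\ref{theorem-rank2}: since $W$ has Borel-rank at most $2$, the game $G$ admits an equilibrium $(s,r)$. I would then let $\Omega$ denote the segment $\Omega^r$ that contains the run $r$, which will be the segment witnessing the conclusion.

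Next I would translate this equilibrium of $G$ into an equilibrium of the restricted game $G_\Omega$. The key tool is the characterization recorded just after Definition~\ref{def-weakeq}: the pair $(s,r)$ is an equilibrium in $G$ if and only if $s$ is an equilibrium in $G_{\Omega^r}$ (in the sense of Definition~\ref{def-gamerestr}) and $r$ is the unique run consistent with $s$ in $\Omega^r$. Applying this equivalence to the equilibrium supplied by Theorem~\ref{theorem-rank2}, I conclude that $(s,r)$ is in particular an equilibrium in $G_\Omega$.

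Finally, I would apply the preceding lemma (``Equilibrium implies determinacy'') to the equilibrium $(s,r)$ in $G_\Omega$. That lemma yields that $s_1$ is a winning strategy for player~$1$ in $G_\Omega$ when $r\in W$, and that $s_2$ is a winning strategy for player~$2$ when $r\in\Omega\setminus W$. In either case one of the players has a winning strategy in $G_\Omega$, so $G_\Omega$ is determined, which is exactly the assertion.

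Every step here is an immediate appeal to an already-proved statement, so there is no genuine obstacle: all the substantive difficulty has been absorbed into the Cantor's-intersection-theorem construction underlying Theorem~\ref{theorem-rank2} and into the determinacy lemma. The only point that needs care is the passage from an equilibrium of the whole game $G$ to an equilibrium of the restricted game $G_\Omega$, and this is precisely what the equivalence noted after Definition~\ref{def-weakeq} provides; I would make sure to cite it explicitly rather than treat it as obvious.
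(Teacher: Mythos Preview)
Your proposal is correct and follows essentially the same approach as the paper, which simply states that the result follows from Theorem~\ref{theorem-rank2}. Your write-up merely spells out the two additional ingredients (the characterization after Definition~\ref{def-weakeq} and the ``Equilibrium implies determinacy'' lemma) that make the implication explicit.
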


The following example shows that the conclusion of Theorem \ref{thm-detsegrank2} is generally not true for all segments, not even if $W$ is open.

\begin{example}[\textbf{Non-determined segment under an open winning set}]\rm
Consider the game in Example \ref{ex-counter} and the specific segment $\Omega$ in which: (1) all runs are almost single-action for both players, and (2) action 1 is the finite action of both players. 

We change the winning set $W$ of Example \ref{ex-counter} (which has rank 3) into an open set $W'$ in such a way that $W\cap\Omega = W'\cap \Omega$, and hence $G_\Omega$ is non-determined. Let the open set $W'$ be given by the reversed positions $q$ such that $q$ starts at an odd stage, say at stage~$n$, and in $q$ player 2 plays action 0 in stage $n$ and player 1 plays action 1 in the even stages $n+1,\ldots,-2,0$. Within $\Omega$, the set $W'$ expresses that player 1 wins because she is the first to  use her finite action exclusively after some stage. Thus, $W\cap\Omega = W'\cap \Omega$, and hence $G_\Omega$ is not determined 
when the 
winning set
is
$W'$. $\blacklozenge$
\end{example}

\section{Multi-player non-zero-sum games}\label{sec-multiplayer}

In this section we turn to multi-player games with infinite past. In light of Example~\ref{ex-counter}, the existence of an equilibrium can only be guaranteed under relatively strong conditions. 
We establish the existence of equilibrium when the payoff functions of the players are continuous.

A payoff function $u_i:R\to\R$ of player $i\in I$ is \emph{continuous} in the product topology on the set $R$ of runs, if for every run $r=(\ldots,a_{-1},a_0)$ and every $\ep>0$ there is a stage $n\in\dZ_0$ with the following property: 
$|u_i(r)-u_i(r')|\leq \ep$
for every
run $r'$ 
such that $r_{< n} = r'_{< n}$.

The analysis of games with continuous payoffs 
turns out to be
more complicated than one would probably expect, 
for two reasons. We discuss each reason separately.\medskip

\noindent\textbf{I: Continuity of the function that assigns the active player to each position.} 
If two runs are close to each other, then continuity of the payoff function ensures that the payoffs are also close to each other. Yet, if the active players are different along these runs, then the strategic considerations along these two runs can be very different from each other, and the payoff difference between the two runs is rather irrelevant. Therefore, to have a meaningful consequence of payoff continuity, we also need to require that, for each stage $n\in\dZ_0$, the function $\iota_n:P_n\to I$, which assigns the active player to each position in this stage, is continuous. We will show in Example \ref{ex-cont-noeq} that continuity of $\iota_n$ is indeed essential.

In fact, for a given stage $n$, the function $\iota_n$ is continuous if and only if there is a stage $m\leq n$ (where $m$ may depend on $n$) such that the value of the function $\iota_n$ does not depend on the actions before stage $m$: if $p_n$ and $p'_n$ are two positions such that in each stage $k\in\{m,m+1,\ldots,n-1\}$ the actions in $p_n$ and $p'_n$ coincide, then $\iota(p_n)=\iota(p'_n)$.\footnote{Indeed, the existence of such a stage $m$ implies continuity of $\iota_n$. Now we argue that if $\iota_n$ is continuous, then there is such a stage $m$. Since $\iota_n$ is continuous, for each player $i\in I$, the set $\iota^{-1}(i)$ is an open subset of $P_n$. Hence, each $\iota^{-1}(i)$ is also closed, and thus clopen. It is known that a clopen subset of an infinite product of finite sets can only depend on finitely many coordinates (e.g., Dubins and Savage [2014], or Lemma 3 in Flesch et al. [2019]).} For instance, $\iota_n$ is continuous if the identity of the active player only depends on the actions played in stages $n-1$ and $n-2$. 
\medskip

\noindent\textbf{II. Existence of a consistent run.} We discussed in Section \ref{sec-backind} that backward induction can deliver a strategy profile that admits no consistent run. This issue remains present even if the game has continuous payoffs, cf.~the discussion of Example \ref{ex-norun} in Section \ref{sec-backind}. Thus, as before, to guarantee the existence of a consistent run, our proof uses Cantor’s intersection theorem. 

\begin{theorem}[\textbf{Existence of equilibrium for continuous payoffs}]\label{theorem-cont}
Consider a game $G$ with infinite past. Let the set $P_n$ of positions in stage $n$, for any stage $n\in\dZ_0$, and the set $R$ of runs be endowed with the product topology. Assume that:
\begin{itemize}
    \item[(C1)] The function $\iota_n:P_n\to I$, which assigns the active player to each position in stage $n$, is continuous for each $n\in\dZ_0$.
    \item[(C2)] The payoff function $u_i:R\to\mathbb{R}$ of each player $i\in I$ is continuous.
\end{itemize} 
Then, the game $G$ admits an equilibrium.
\end{theorem}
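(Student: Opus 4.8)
The plan is to follow the Cantor-intersection strategy described in Section \ref{sec-backind}: I will produce a nested sequence of nonempty compact sets of runs whose common elements are equilibrium runs. The decisive observation is that Definition \ref{def-weakeq} only asks that $s[r_{<n}]$ be a \emph{traditional} (Nash) equilibrium in each finite subgame $G[r_{<n}]$, and not that it be subgame-perfect. I am therefore free to deter deviations by non-credible threats, and the threat I will use is the minmax value, which — unlike the backward-induction (SPE) value — depends continuously on the position.

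Concretely, for each player $i\in I$ and each position $p\in P$, let $\underline v_i(p)$ be the value of the finite zero-sum perfect-information game played on $G[p]$ in which player $i$ maximizes $u_i$ and the remaining players jointly minimize $u_i$; set $\underline v_i(r)=u_i(r)$ for runs. This value is obtained by backward induction over the finitely many stages of $G[p]$, taking a maximum at the nodes controlled by $i$ and a minimum at the nodes controlled by the others. The first step is a continuity lemma: for each stage $n$, the map $p_n\mapsto\underline v_i(p_n)$ is continuous on $P_n$. This is where both hypotheses enter. By (C2) each terminal payoff $p_n\mapsto u_i(p_n,a_n,\dots,a_0)$ is continuous, and by (C1) the mover at each node is locally constant, so that $\underline v_i$ is locally a finite composition of pointwise maxima and minima of continuous functions, hence continuous. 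I want to stress that this is exactly where the minmax value is preferable to the SPE value: the SPE payoff of a \emph{non-active} player is only upper semicontinuous (an argmax/tie phenomenon), so the set of SPE runs need not be closed — which is precisely the obstacle flagged in Section \ref{sec-backind} — whereas the \emph{value} of a zero-sum game involves only maxima and minima of a single function and is genuinely continuous.

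Next I would define, for each $n\in\dZ_0$, the set $R_n$ of runs $r$ such that for every stage $k\in\{n,\dots,0\}$, writing $i=\iota(r_{<k})$ for the active player, one has $u_i(r)\ge\underline v_i((r_{<k},a))$ for all $a\in A$. The sequence is nested, $R_0\supseteq R_{-1}\supseteq\cdots$, since decreasing $n$ only adds constraints. Each $R_n$ is closed: for fixed $k$ the map $r\mapsto\iota(r_{<k})$ is locally constant by (C1), and on the clopen piece where it equals $i$ the condition $u_i(r)\ge\underline v_i((r_{<k},a))$ is closed by the continuity lemma, so $R_n$ is a finite intersection of finite unions of closed sets. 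Finally each $R_n$ is nonempty: fix any position $p_n$, fix a subgame-perfect equilibrium $\sigma$ of the finite game $G[p_n]$, and let $r$ be its play over the stages $n,\dots,0$. Since $\sigma$ induces a Nash equilibrium in every subgame $G[r_{<k}]$ with $k\ge n$, the active player $i=\iota(r_{<k})$ cannot gain by deviating to an action $a$ and thereafter guaranteeing herself $\underline v_i((r_{<k},a))$ (which she can do, as maximin equals the value in a finite zero-sum perfect-information game); hence $u_i(r)\ge\underline v_i((r_{<k},a))$ for all $a$, so $r\in R_n$. As $R$ is compact and metrizable, Cantor's intersection theorem yields a run $r^*\in\bigcap_n R_n$, and $r^*$ then satisfies $u_i(r^*)\ge\underline v_i((r^*_{<k},a))$ at \emph{every} stage $k$ and for every $a\in A$, with $i=\iota(r^*_{<k})$.

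It then remains to supplement $r^*$ with a strategy profile. I would take $s$ to follow $r^*$ as long as no one has deviated and, as soon as some player $i$ first deviates at a stage $k$, to switch to a profile in which the other players hold $i$ to $\underline v_i$ in the remaining subgame (off the segment of $r^*$, $s$ is arbitrary). For each $n$, the restriction $s[r^*_{<n}]$ is then a traditional equilibrium of $G[r^*_{<n}]$: if player $i$ deviates at a stage $k\ge n$ to an action $a$, she is subsequently minmaxed and obtains at most $\underline v_i((r^*_{<k},a))\le u_i(r^*)$, whereas staying on $r^*$ yields $u_i(r^*)$, so following $r^*$ is a best response and no deviation is profitable. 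Thus $(s,r^*)$ is an equilibrium. The main obstacle throughout is the closedness of the sets fed into Cantor's theorem, exactly as anticipated in Section \ref{sec-backind}; the whole point of the argument is to bypass the discontinuity of backward-induction payoffs by deterring with the continuous minmax value, which is legitimate precisely because Definition \ref{def-weakeq} requires only Nash play in each subgame.
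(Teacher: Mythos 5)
Your proof is correct, and while it follows the same Cantor-intersection skeleton that the paper announces in Section \ref{sec-backind} (nested nonempty compact sets $R_n$ of runs, a limit run $r^*$, then a trigger strategy profile), the substance of the key steps is genuinely different from the paper's. The paper defines $R_n$ as the set of runs whose tail from stage $n$ is induced by \emph{some} Nash equilibrium of the finite subgame $G[r_{<n}]$, and proves closedness by a pigeonhole argument: along a convergent sequence of runs in $R_n$ the witnessing subgame strategy profiles live in a finite set, so one may pass to a subsequence on which they are constant and then use (C1)--(C2) to carry the equilibrium property to the limit; the deterrent after a deviation at stage $n$ is the continuation of that subgame equilibrium $s^{r,n}$. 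You instead encode membership in $R_n$ by explicit payoff inequalities $u_i(r)\ge\underline v_i((r_{<k},a))$ against the minmax value, prove closedness via a continuity lemma for $p\mapsto\underline v_i(p)$ (correctly using (C1) to make the max/min structure locally constant and (C2) for the terminal payoffs), and deter deviations by minmaxing the deviator --- which is legitimate since Definition \ref{def-weakeq} only demands Nash play in the subgames along $r^*$, exactly as you note. Your nonemptiness argument (the play of a subgame-perfect equilibrium of $G[p_n]$ satisfies the inequalities because a deviator could always secure her maximin continuation) and your final verification are both sound. What your route buys is a more quantitative and arguably more transparent closedness proof, at the cost of an extra lemma on the continuity of the minmax value; what the paper's route buys is that it avoids introducing the value function altogether by exploiting the finiteness of the set of subgame strategy profiles. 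Both uses of (C1) and (C2) occur at the analogous places, and both constructions yield an equilibrium in the sense of Definition \ref{def-weakeq}.
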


\begin{proof}
For each stage $n\in\dZ_0$, let $R_n$ denote the set of runs $r=(\ldots,a_{-1},a_0)$ such that 
in the subgame $G[r_{<n}]$
there is a strategy profile $s^{r,n}$ in stages $n,\ldots,-1,0$ with the following properties: (1) $s^{r,n}$ is an equilibrium in this subgame, and 
(2)  $s^{r,n}$ induces the sequence $(a_n,\ldots,a_{-1},a_0)$. 
Thus,
the domain of $s^{r,n}$ is the set $A^{\leq -n}$ of sequences in $A$ of length at most $-n$, which is a finite set. The position $r_{<n}$ is needed in (1) to obtain the payoffs in the subgame $G[r_{<n}]$ through the payoff functions $(u_i)_{i\in I}$, and thus to verify the equilibrium conditions.

The sets $R_n$ have the following properties:
\begin{itemize}
    \item[(i)] The sequence $R_n$ is nonincreasing: $R_0\supseteq R_{-1}\supseteq R_{-2}\supseteq \cdots$\smallskip\\
    We show that $R_{n+1}\supseteq R_n$, where $n<0$. So, assume that $r=(\ldots,a_{-1},a_0)\in R_n$ with the corresponding strategy profile $s^{r,n}$. For the sequence $(a_{n+1},\ldots,a_{-1},a_0)$, the continuation of the strategy profile $s^{r,n}$ from stage $n+1$ satisfies (1) and (2) of the definition of $R_{n+1}$. Hence, $r\in R_{n+1}$, as desired.
    
    \item[(ii)]  Each $R_n$ is nonempty.\smallskip\\
    Indeed, $r=(\ldots,a_{-1},a_0)$ belongs to $R_n$ whenever $r_{<n}$ is an arbitrary position in stage $n$ and $(a_n,\ldots,a_{-1},a_0)$ is the sequence of actions induced in the subgame $G[r_{<n}]$ by an arbitrary equilibrium. Note that each subgame is a finite game, so it admits an equilibrium.
    
    \item[(iii)]  Each $R_n$ is compact.\smallskip\\
    Indeed, since $R_n\subseteq R$ and $R$ is compact and metrizable, we only need to show that $R_n$ is sequentially closed. Let $(r_k)_{k=1}^\infty$ be a sequence of runs in $R_n$ converging to $r$.
    We need to prove that $r\in R$.\smallskip\\
    Since $(r_k)_{k=1}^\infty$ converges to $r$, by taking a subsequence if necessary, we can assume that all $r_k$ and $r$ end with the same sequence $(a_n,\ldots,a_{-1},a_0)$ in stages $n,\ldots,-1,0$.\smallskip\\
    For each $k\in\N$, we have $r_k\in R_n$, and hence there is a strategy profile $s_k:=s^{r_k,n}$ in stages $n,\ldots,-1,0$ with the following properties: (1) $s_k$ is an equilibrium in the subgame $G[(r_k)_{<n}]$, where $(r_k)_{<n}$ is the position along $r_k$ in stage $n$, and (2) $s_k$ induces the sequence $(a_n,\ldots,a_{-1},a_0)$ in stages $n,\ldots,-1,0$.\smallskip\\
    Since the domain of each $s_k$ is the finite set $A^{\leq -n}$, we can assume, by taking a subsequence if necessary, that $s_k$ is a constant: $s_k=s$ for all $k\in\N$. \smallskip\\
    Thus: 
    \begin{itemize}
        \item Since $s_k=s$ is an equilibrium in the subgame $G[(r_k)_{<n}]$, for all $k\in\N$, using conditions (C1) and (C2) and that $(r_k)_{<n}$ converges to $r_{<n}$, we obtain that $s$ is an equilibrium in the subgame $G[r_{<n}]$.
        \item $s_k=s$ induces the sequence $(a_{n},\ldots,a_{-1},a_{0})$ in stages $n,\ldots,-1,0$, for all $k\in\N$.
    \end{itemize} 
\end{itemize}
By properties (i), (ii), and (iii), it follows from Cantor's intersection theorem that the intersection $R_0\cap R_{-1}\cap R_{-2}\cap \cdots $ is nonempty. Let the run $r=(\ldots,a_{-1},a_0)$ be an element of this intersection. For each $n\in\dZ_0$, as $r\in R_n$, we can fix a strategy profile $s^{r,n}$ as in the definition of $R_n$. Let $s$ be a strategy profile that satisfies the following properties for the position $r_{<n}$ along $r$ in any stage $n\in\dZ_0$:
\begin{itemize}
    \item[(1)] The strategy $s_{i_n}$ of the active player $i_n$ recommends to follow $r$ and thus to play the action $a_n$.
    \item[(2)] If player $i_n$ deviates from $a_n$ in the position $r_{<n}$, then the opponents of player $i$ switch to their strategies in $s^{r,n}$ from stage $n+1$ onward, in the remaining subgame. Since $s^{r,n}$, by the definition of $R_n$, is an equilibrium in $G[r_{<n}]$, this makes sure that player $i_n$'s deviation is not profitable.
\end{itemize}
By (1), the run $r$ is consistent with $s$. Thus, $(s,r)$ is an equilibrium. 
\end{proof}

One application of Theorem \ref{theorem-cont} is when the payoffs are reversed-time discounted. 
Such an evaluation was considered, e.g., by Puente [2006], and by Ray et al.~[2024] in their model of dynamic consumption.
For the same reasons as in the case of Theorem \ref{theorem-cont}, 
the proof of this application
%is not trivial and 
does not follow from the usual arguments for the discounted payoff in the standard setting where the stages is the set $\N$ of natural numbers.

\begin{corollary}[\textbf{Games with reversed-time discounted payoffs}]\label{cor-reverse}
Consider a game with infinite past that satisfies condition (C1) of Theorem \ref{theorem-cont}. For each player $i\in I$, let $g_i:I\times A\to\R$ be a function that assigns an instantaneous payoff $g_i(j,a)$ to player $i$ whenever player $j\in I$ plays action $a\in A$. Assume that, along each run $r=(\ldots,a_{-1},a_0)$ where the sequence of active players is denoted by $(\ldots,j_{-1},j_0)$, player $i$'s payoff is 
\begin{align}u_i(r)\,&=\,(1-\delta)\cdot \sum_{n=-\infty}^0 \delta^{-n}g_i(j_n,a_n)\label{eq-revdisc}\\
&=\,(1-\delta)\cdot(g_i(j_0,a_0)+\delta \cdot g_i(j_{-1},a_{-1})+\delta^2\cdot g_i(j_{-2},a_{-2})+\cdots),\nonumber
\end{align}
where $\delta\in(0,1)$ is a discount factor. Then the game admits an equilibrium.
\end{corollary}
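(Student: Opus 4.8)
The plan is to deduce the corollary directly from Theorem~\ref{theorem-cont}: since condition (C1) is assumed in the statement, the entire task reduces to verifying condition (C2), namely that each payoff function $u_i$ defined by \Eqref{eq-revdisc} is continuous on $R$ in the product topology. Once this is shown, Theorem~\ref{theorem-cont} yields an equilibrium with no further work.

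I would first record that each $g_i$ is bounded, since its domain $I\times A$ is finite, so that $M:=\max_{j\in I,\,a\in A}|g_i(j,a)|<\infty$; this guarantees that the series in \Eqref{eq-revdisc} converges absolutely and that $u_i$ is bounded. The basic estimate is that the tail of the discounted sum is negligible: for a threshold stage $-N$, the total contribution of all stages $n\leq -N-1$ is at most $(1-\delta)\cdot 2M\sum_{m\geq N+1}\delta^{m}=2M\delta^{N+1}$, which can be made smaller than any prescribed $\ep>0$ by taking $N$ large. Hence, to control $|u_i(r)-u_i(r')|$ up to $\ep$, it suffices to make the finitely many terms indexed by $n\in\{-N,\ldots,-1,0\}$ agree between $r$ and $r'$.

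The subtle point, and the main obstacle, is that the $n$-th term $g_i(j_n,a_n)$ depends not only on the action $a_n$ but also on the identity $j_n=\iota(r_{<n})$ of the active player at stage $n$, which is governed by the past of the run. Fixing the recent actions $a_n$ for $n\in\{-N,\ldots,0\}$ alone does not pin down the active players $j_n$, as these may change if the far past is altered; this is exactly where (C1) enters. Because each $\iota_n$ is continuous, for every one of the finitely many stages $n\in\{-N,\ldots,0\}$ there is a stage $m_n\leq n$ such that $\iota_n$ depends only on the actions in stages $\{m_n,\ldots,n-1\}$. Setting $m^*:=\min\{-N,m_{-N},\ldots,m_0\}$, which is a finite integer, I would conclude that fixing the finite block of coordinates $\{m^*,\ldots,-1,0\}$ determines both $a_n$ and $j_n$, and hence the term $g_i(j_n,a_n)$, for every $n\in\{-N,\ldots,0\}$.

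Putting these together: given $\ep>0$, choose $N$ with $2M\delta^{N+1}<\ep$ and then $m^*$ as above. For any run $r'$ that agrees with $r$ on the finite block $\{m^*,\ldots,0\}$, which is a basic product-topology neighborhood of $r$, the first $N+1$ terms of the two discounted sums coincide and the remaining tails together differ by at most $2M\delta^{N+1}<\ep$, so $|u_i(r)-u_i(r')|<\ep$. This establishes continuity of each $u_i$, i.e.\ condition (C2), and an appeal to Theorem~\ref{theorem-cont} completes the proof. I expect the discounting estimate to be routine; the only genuinely non-trivial ingredient is the use of (C1) to reduce the dependence of the active players at the relevant recent stages to finitely many coordinates, which is what makes the controlling neighborhood a legitimate product-topology neighborhood.
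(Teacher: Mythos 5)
Your proposal is correct and follows exactly the route the paper intends: the corollary is stated as a direct application of Theorem \ref{theorem-cont}, so the only content is verifying condition (C2), which you do via the standard geometric tail bound together with the observation that (C1) makes each active-player identity $j_n$ for the finitely many recent stages depend on only finitely many coordinates, so that a genuine basic product-topology neighborhood controls all the heavy terms. This is precisely the point the paper flags (that the argument does not reduce to the usual forward-discounting case without (C1)), and your use of the finite-dependence characterization of continuity of $\iota_n$ is the same one given in the paper's footnote.
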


\begin{remark}[\textbf{Continuous payoffs in a segment}]\rm 
Under conditions (C1) and (C2), Theorem \ref{theorem-cont} implies that there is a segment $\Omega$ such that the game $G_{\Omega}$ admits an equilibrium. In general, not all segments admit an equilibrium. Consider the 1-player game in which the action set is $A=\{0,1\}$, each action $a\in A$ gives an instantaneous payoff equal to $a$, and the payoff is reversed-time discounted as in \Eqref{eq-revdisc}. Then, in the unique equilibrium player 1 always plays action 1. $\blacklozenge$
\end{remark}

The following example shows that condition (C1) in Theorem \ref{theorem-cont} is essential: condition (C2) alone, even if the payoffs arise through  reversed-time discounting, is generally not sufficient to guarantee the existence of an equilibrium.

\begin{example}[\textbf{Reversed-time discounted payoffs but no equilibrium}]\label{ex-cont-noeq}\rm 

There are two players in the game and two actions: $I=\{1,2\}$ and $A=\{0,1\}$. Before stage 0, the active player is always player 1. In stage 0, player 2 is the active player in the position $p_0=(\ldots,1,1)$ in which player 1 has always played action 1 before stage 0, and player 1 is the active player in all other positions in stage 0.  
Thus, in this game, condition (C1) of Theorem \ref{theorem-cont} fails.

The payoffs are as reversed-time discounted as in \Eqref{eq-revdisc} with $g_1(1,a)=g_1(2,a)=a$ for player 1 and $g_2(2,a)=1-a$ for player 2 (with $g_2(1,a)$ being arbitrary). This means that, as far as the instantaneous payoffs are concerned, player 1 wants to choose action 1 and player 2 wants to choose action 0.\footnote{In fact, this game can be seen as a zero-sum game.} 
Note that condition (C2) of Theorem \ref{theorem-cont} holds in this game. 

One can check similarly to Example \ref{ex-noeq} that this game has no equilibrium. The crucial point is that if player 1 always chooses her best action, namely action 1, then player 2 becomes active in stage 0 and chooses action 0, which gives player 1 a bad instantaneous payoff, namely payoff 0, in stage 0. Hence, player 1 should choose action 0 once before stage 0, but there is no optimal stage to do so, as player 1 would like to do it as early as possible. $\blacklozenge$
\end{example}

\section{Discussion}
\label{sec-discussion}

\noindent\textbf{Infinite action sets.} We assumed in Definition \ref{def-game} that the action set $A$ is finite. We used this assumption multiple times when we applied Cantor's intersection theorem (cf.~Lemma \ref{theorem-open}, Theorems \ref{theorem-rank2} and \ref{theorem-cont}, and therefore also Corollary \ref{cor-reverse}). If $A$ is infinite, the statements and the corresponding proofs fail: $A$ is not compact in the discrete topology, and hence the set $R$ of runs is no longer compact in the product topology.
Therefore, 
we cannot apply Cantor's intersection theorem.

For instance, consider the following game $G$: The action set is $A=\dZ_0$ and player 1's winning set is $W=R$, so player 1 is winning along each run. Recall that player 2 is active in odd stages. 
For each odd stage $n\in\dZ_0$ denote by $\widehat Q_n$ the set of reversed positions $(a_n,\ldots,a_{-1},a_0)$ with $a_{-1}\geq n$.
We thus obtain a representation as in \Eqref{eq-Wunion} for $W=R$. 

As $W=R$, all equilibria have a run winning for player 1. But for every $n\in\dZ_0$, player~2 has a winning strategy in $\widehat G_{\geq n}$: she can play action $a_{-1}=n-1$ in stage $-1$. This demonstrates that Part 2 of Lemma \ref{theorem-open} is not valid when the action set $A$ is infinite.\medskip

\noindent\textbf{Games with infinite past and infinite future.}
In standard dynamic games, the set of stages is the set $\N$ of natural numbers. Therefore, one may wonder about the extension of the games in Definition \ref{def-game} to games where the future is also infinite, that is, when the set of stages is $\dZ$, the set of all integers. Since the model of these games is very similar to that in Definition \ref{def-game} and Section \ref{sec-model}, we do not provide a detailed description. The main difference is that now each subgame $G[p_n]$, where $p_n$ is a position in some stage $n\in\dZ$, has an infinite duration in stages $n,n+1,n+2,\ldots$.

The counter-example, Example \ref{ex-counter}, can be extended to this setup, by assuming that the actions in stages in $\N$ do not affect the payoffs.

It is an interesting question, and a possible line for future research, to determine to which degree our existence results can be extended to this model. 
One possible idea would be the following. Consider a position in stage 1. The resulting subgame is a standard game, played on the set of stages $\N$. Hence, one can try to find an equilibrium in the subgame by applying a result from standard dynamic games. This would allow to fix the payoffs after the action in stage 0: the payoff for a run $(\ldots,a_{-1},a_0)$ would be the equilibrium payoff found for the remaining subgame. This approach, however, encounters an additional difficulty: the payoffs fixed after the action in stage 0 must be a nice function of the run $(\ldots,a_{-1},a_0)$, e.g., 
%a Borel-measurable function of rank 2, or 
a continuous function, otherwise our existence result is not applicable.\medskip

\noindent\textbf{Games with simultaneous moves.} In our model, in each position only one of the players is active. It would be interesting to investigate extensions to games where the players move simultaneously. Note that the concept of equilibrium in Definition \ref{def-weakeq}, in particular the notion of consistent run, needs to be adjusted for randomized strategies.

\section*{References}

\noindent
Adlakha S, Johari R, Weintraub GY [2015]. 
Equilibria of dynamic games with many players: Existence, approximation, and market structure. 
\textit{Journal of Economic Theory}, 156, 269-316.\medskip

\noindent Algoet PH and Cover TM [1988]. Asymptotic optimality and asymptotic equipartition properties of log-optimum investment. \textit{The Annals of Probability}, 16(2), 876-898.\medskip

\noindent Al\'{o}s-Ferrer C and Ritzberger K [2016]. \textit{The Theory of Extensive Form Games}. Springer.\medskip
 
\noindent Ashkenazi-Golan G, Flesch J, Predtetchinski A, Solan E [2022]. Existence of equilibria in repeated games with long-run payoffs. \textit{Proceedings of the National Academy of Sciences}, 119(11), e2105867119.\medskip

\noindent
Balseiro SR, Besbes O, Weintraub GY [2015]. 
Repeated auctions with budgets in ad exchanges: Approximations and design. \textit{Management Science}, 61(4), 864-884.\medskip

\noindent Chatterjee K and Henzinger TA [2012]. A survey of stochastic $\omega$-regular games. \textit{Journal of Computer and System Sciences}, 78,394-413 (2012).\medskip

\noindent Dubins LE and Savage LJ [2014]. \textit{How to Gamble if you Must: Inequalities for Stochastic Processes.} New York, Dover Publications. Edited and updated by W. D. Sudderth and D. Gilat.\medskip

\noindent Flesch J, Kuipers J, Mashiah-Yaakovi A, Schoenmakers G, Solan E, Vrieze K [2010]. Perfect-information games with lower-semicontinuous payoffs. \textit{Mathematics of Operations Research}, 35(4), 742-755.\medskip

\noindent Flesch J and Solan E [2023]. Equilibrium in two-player stochastic games with shift-invariant payoffs. \textit{Journal de Mathématiques Pures et Appliquées}, 179, 68-122.\medskip

\noindent Flesch J, Vermeulen D, Zseleva A [2019]. On the equivalence of mixed and behavior strategies in finitely additive decision problems. \textit{Journal of Applied Probability}, 56(3), 810-829.\medskip

\noindent Fudenberg D and Levine D [1983]. Subgame-perfect equilibria of finite-and infinite-horizon games. \textit{Journal of Economic Theory}, 31(2), 251-268.\medskip

\noindent Fudenberg D and Tirole J [1991]. \textit{Game Theory}. MIT press.\medskip

\noindent Gale D and Stewart FM [1953]. Infinite games with perfect information. \textit{Contributions to the Theory of Games}, 2(28), 245-266.\medskip

\noindent Gorokhovsky A and Rubinchik A. [2018]. Regularity of a general equilibrium in a model with infinite past and future. \textit{Journal of Mathematical Economics}, 74, 35-45.\medskip

\noindent
Iyer K, Johari R, Sundararajan  [2014]. 
Mean field equilibria of dynamic auctions with learning. 
\textit{Management Science}, 60(12), 2949-2970.\medskip

\noindent
Kandori M, Mailath GJ, Rob R [1993]. 
Learning, mutation, and long run equilibria in games. 
\textit{Econometrica}, 61(1), 29-56.\medskip

\noindent Kechris A [2012]. \textit{Classical Descriptive Set Theory}. Springer Science \& Business Media.\medskip

\noindent Maitra A and Sudderth W [1998]. Finitely additive stochastic games with Borel measurable payoffs. \textit{International Journal of Game Theory} 27, 257-267.\medskip

\noindent Martin DA [1975]. Borel determinacy, \textit{Annals of Mathematics}, {102}(2), 363-371.\medskip

\noindent Martin DA [1998].
The determinacy of Blackwell games,
\textit{Journal of Symbolic Logic}, {63}(4), 1565-1581.\medskip

\noindent Maschler M, Solan E, Zamir S [2020]. \textit{Game Theory}. Cambridge University Press.\medskip

\noindent Mertens J.-F. [1987]. Repeated games. \textit{Proceedings of the International Congress of Mathematicians}. American Mathematical Society, Providence, RI, 1528-1577.\medskip

\noindent Mertens J.-F., Sorin S, Zamir S [2015]. \textit{Repeated games}. %Vol 55, 
Cambridge University Press.\medskip

\noindent Morriston W [1999]. Must the past have a beginning? \textit{Philo}, 2(1), 5-19.\medskip

\noindent Puente S [2006]. Dynamic stability in repeated games, Working Papers 0618, Banco de España.\medskip

\noindent Ray D, Vellodi N, Wang R [2024]
Past and future: backward and forward discounting. \textit{Journal of the European Economic Association}, 22(2), 837-875.\medskip

\noindent Sorensen R [1999]. Infinite ``backward'' induction arguments. \textit{Pacific Philosophical Quarterly}, 80, 278-283.\medskip
    
\end{document}